\documentclass[11pt]{article}
\usepackage[T1]{fontenc}
\usepackage{lmodern}
\usepackage{microtype}
\usepackage{amsmath,amssymb,amsthm,mathtools}
\usepackage{booktabs}
\usepackage{array}
\usepackage{enumitem}
\usepackage[margin=1in]{geometry}
\usepackage[hidelinks]{hyperref}
\usepackage{xcolor}
\usepackage{stmaryrd}
\usepackage{mathrsfs}
\usepackage[nameinlink,capitalise,noabbrev]{cleveref}

\usepackage{booktabs}
\usepackage{tabularx}
\usepackage{array}
\usepackage{placeins}
\usepackage{tikz}

\hypersetup{
  colorlinks=true,
  linkcolor=blue!50!black,
  citecolor=blue!50!black,
  urlcolor=blue!50!black
}

\newtheorem{theorem}{Theorem}[section]
\newtheorem{proposition}[theorem]{Proposition}
\newtheorem{conjecture}[theorem]{Conjecture}
\newtheorem{problem}[theorem]{Problem}
\newtheorem{lemma}[theorem]{Lemma}
\newtheorem{corollary}[theorem]{Corollary}
\theoremstyle{definition}
\newtheorem{definition}[theorem]{Definition}
\newtheorem{remark}[theorem]{Remark}
\newtheorem{maintheorem}{Theorem}

\newtheorem{maincorollary}[maintheorem]{Corollary}
\newcolumntype{Y}{>{\raggedright\arraybackslash}X}
\newcommand{\R}{\mathbb{R}}
\newcommand{\Sph}{\mathbb{S}}
\newcommand{\vol}{\operatorname{vol}}
\newcommand{\conv}{\operatorname{conv}}

\newcommand{\depth}{\operatorname{depth}}
\newcommand{\cK}{c}
\newcommand{\dd}{\,\mathrm{d}}
\newcommand{\ip}[2]{\left\langle #1,#2\right\rangle}
\newcommand{\eps}{\varepsilon}

\title{\textbf{Barycentric Cuts at Maximal Depth and a Five-Cut Theorem}}
\author{Xiaoxiang Jiao \and Hangyue Zhu}
\date{\today}

\begin{document}
\maketitle

\begin{abstract}
We determine the sharp number and the complete finite spectrum of barycentric hyperplanes through the Tukey median. We further prove that every convex body in $\mathbb R^n$, $n\ge3$, contains an interior point incident with at least five barycentric hyperplanes. In particular, this settles Grünbaum's original conjecture in dimension four.
\end{abstract}

\noindent\textbf{Keywords.} Convex bodies; barycentric sections; Tukey median; hemispherical transform.

\noindent\textbf{2020 Mathematics Subject Classification.} 52A20, 52A38.

\tableofcontents
\section{Introduction}
\label{sec:introduction}

Throughout, let $n\geq2$. A \emph{convex body} is a compact convex set $K\subset\mathbb R^{n}$ with nonempty interior. Let $\mathcal K^n$ denote the class of convex bodies in $\mathbb R^n$. The \emph{volume} and \emph{centroid} of a convex body $K$ are
\[
\vol(K)=\int_K 1\,dx,
\qquad
c(K)=\frac{1}{\vol(K)}\int_K x\,dx.
\]
If $H$ is an affine hyperplane with $\vol_{n-1}(K\cap H)>0$, the centroid of the section $K\cap H$ is
\[
c(K\cap H)
=
\frac{1}{\vol_{n-1}(K\cap H)}
\int_{K\cap H}x\,d\vol_{n-1}(x).
\]
For $p\in\operatorname{int}K$, a hyperplane $H$ passing through $p$ is said to be \emph{barycentric at $p$} if \(c(K\cap H)=p.\) We write
\[
\mathcal B_K(p)
:=
\{H:p\in H,\ c(K\cap H)=p\},
\]
where hyperplanes are counted without orientation. We call $K$ \emph{smooth and positively curved} if $\partial K$ is $C^\infty$ and all its principal curvatures are positive.

In 1961, Gr\"unbaum posed two closely related questions concerning barycentric sections of convex bodies \cite{Gru61}; see also \cite{PTW}. We shall refer to the affirmative form of his general-point question as the following conjecture.

\begin{conjecture}[Gr\"unbaum's conjecture]
\label{conj:grunbaum-general}
For every convex body $K\subset\mathbb R^n$, there exists $p\in\operatorname{int}K$ such that 
\(
\#\mathcal B_K(p)\geq n+1.
\)
\end{conjecture}

Two canonical choices of the point $p$ are the centroid of $K$ and the point of maximal halfspace depth. These choices lead to two quantitative versions of Conjecture~\ref{conj:grunbaum-general}.

For the centroid version, define
\[
\mu(K):=\#\mathcal B_K(c(K)),
\qquad
\mu(n):=\min_{{K\in\mathcal K^n}}\mu(K),
\]
where the minimum is taken over all convex bodies in $\mathbb R^n$.

\begin{problem}[{The centroid problem of Gr\"unbaum and Loewner; \cite[Problem~28]{Fen67}}]
\label{prob:centroid}
Determine $\mu(n)$. In particular, is $\mu(n)\geq n+1$?
\end{problem}

It is now known that
\[
\mu(2)=3,
\qquad
\mu(n)=1\quad\text{for every }n\geq3.
\]
Thus the proposed lower bound $n+1$ is false for the centroid in every dimension $n\geq3$ \cite{XiongYang,MTY25,Gru61,Fen67}.

We next turn to the second distinguished point considered in this paper. For $p\in\operatorname{int}K$, its \emph{halfspace depth} with respect to the uniform probability measure on $K$ is
\[
\operatorname{depth}_K(p)
:=
\frac{1}{\operatorname{vol}(K)}
\min_{u\in\mathbb S^{n-1}}
\operatorname{vol}
\bigl(K\cap\{x:\langle x-p,u\rangle\geq 0\}\bigr).
\]
This function has a unique maximizer $p_K\in\operatorname{int}K$, called the \emph{Tukey median}, or \emph{halfspace median}, of $K$ \cite[Introduction]{PTW}, see also \cite[Section~3.3]{NagySurvey}. Halfspace depth was introduced by Tukey as an affine-invariant multivariate analogue of univariate ranks and medians \cite{Tukey}, its statistical robustness and geometric properties were subsequently developed in \cite{DG92,RR99}. A recent account can be found in \cite{NagySurvey}.

If $u$ realizes the minimum in $\operatorname{depth}_K(p_K)$, Dupin's theorem implies that the hyperplane $p_K+u^\perp$ is barycentric at $p_K$ \cite[Section~3.3]{NagySurvey}. This connection motivates the following maximal-depth version of the barycentric-hyperplane problem. Set
\[
\nu(K):=\#\mathcal B_K(p_K),
\qquad
\nu(n):=\min_{K\in\mathcal K^n}\nu(K),
\]
and define the finite spectrum by
\[
\Sigma_n^{\mathrm{fin}}
:=
\left\{
m\in\mathbb N:
\nu(K)=m
\ \text{for some convex body }K\subset\mathbb R^n
\right\}.
\]

Grünbaum claimed that $\nu(n)\geq n+1$ \cite[\S6.2]{Gru63}. Patáková, Tancer, and Wagner later identified a gap in his argument and proved the universal bound
\[
\nu(n)\geq 4,\qquad n\geq 3
\]
\cite{PTW}. This leads to the following problem.

\begin{problem}[The maximal-depth problem]\label{prob:maximal-depth}
Determine $\nu(n)$. More generally, determine
$\Sigma_n^{\mathrm{fin}}$.
\end{problem}

The principal developments relevant to the two problems above are summarized in Table~\ref{tab:related-work}.

\begin{table}[htbp]
\centering
\caption{Selected developments concerning barycentric hyperplanes.}
\label{tab:related-work}
\small
\setlength{\tabcolsep}{6pt}
\renewcommand{\arraystretch}{1.15}
\begin{tabularx}{\textwidth}{
    @{}
    p{0.11\textwidth}
    >{\raggedright\arraybackslash}X
    p{0.22\textwidth}
    @{}
}
\toprule
Year & Statement/result & Reference \\
\midrule

1961
&
The general and centroid barycentric-section problems were posed.
&
\cite{Gru61}
\\

1963
&
It was claimed that \(\nu(n)\ge n+1\), the argument was later shown to contain a gap.
&
\cite{Gru63,PTW}
\\

1967
&
\(\mu(2)=3\).
&
\cite{Fen67}
\\

2022
&
\(\nu(n)\ge 4\) for every \(n\ge 3\); $\nu(2)\geq 3$. This implies that Conjecture \ref{conj:grunbaum-general} is true for dimensions $2, 3$.
&
\cite{PTW}
\\

2025
&
\(\mu(n)=1\) for every \(n\ge 5\).
&
\cite{MTY25}
\\

2026
&
\(\mu(n)=1\) for every \(n\ge 3\).
&
\cite{XiongYang}
\\

{\ }
&
\(\nu(n)=4\) for every \(n\ge 3\); $\nu(2)=3.$
&
Theorem \ref{thm:intro-spectrum}
\\
{\ }
&
For dimensions \(n\ge3\): at least five barycentric cuts at some interior point.
&
Theorem \ref{intro-5}

\\

{\ }
&
Conjecture \ref{conj:grunbaum-general} is true for dimension $4$.
&
Corollary \ref{intro-conj}
\\

\bottomrule
\end{tabularx}
\end{table}
\FloatBarrier

\medskip
\noindent
\textbf{Main results.}
Our first result gives a sharp answer to Problem~\ref{prob:maximal-depth} and, more generally, classifies all possible finite cut numbers at the Tukey median.

\begin{maintheorem}[Theorems \ref{thm:nu-four}, \ref{thm:spectrum}; Remarks \ref{rem:planar-cut}, \ref{rem:planar-spectrum}]
\label{thm:intro-spectrum}
For \(n=2\),
\[
    \nu(2)=3,
    \qquad
    \Sigma^{\mathrm{fin}}_2=\{3,4,5,\ldots\};
\]
For every \(n\ge3\),
\[
    \nu(n)=4,
    \qquad
    \Sigma^{\mathrm{fin}}_n=\{4,5,6,\ldots\}.
\]
Moreover, every finite value in this spectrum is realized by a $C^\infty$ smooth, positively curved convex body that may be chosen arbitrarily close to unit ball. 
\end{maintheorem}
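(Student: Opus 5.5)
The lower bounds are already available: \cite{PTW} gives $\nu(n)\ge4$ for $n\ge3$ and $\nu(2)\ge3$. What remains is constructive. For every $m\ge4$ (respectively $m\ge3$ when $n=2$) I would build a smooth, positively curved body $K$, arbitrarily close to the unit ball, with exactly $m$ barycentric hyperplanes through its Tukey median. Since every value below the lower bound is excluded, this gives both $\nu(n)$ and $\Sigma_n^{\mathrm{fin}}$.

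\textbf{Reformulation via the hemispherical transform.}
Take $K$ to be a perturbation $K_\eps$ of the ball $B$, with radial function $1+\eps f$ for $f\in C^\infty(\Sph^{n-1})$, and take $p$ near $0$. For a direction $u\in\Sph^{n-1}$, the hyperplane $p+u^\perp$ is barycentric at $p$ exactly when it is a critical point, in $u$, of the halfspace volume
\[
V(p,u)=\vol\bigl(K\cap\{\langle x-p,u\rangle\ge0\}\bigr).
\]
This is Dupin's theorem, as used in the introduction. Hence $\#\mathcal B_K(p)$ equals the number of antipodal pairs of critical points of $u\mapsto V(p,u)-V(p,-u)$ restricted to $\Sph^{n-1}$; equivalently, one may count critical points of $V(p,\cdot)$ modulo $\pm$.

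To first order in $\eps$ and $p$,
\[
V(p,u)=\tfrac12\vol(B)-\kappa_{n}\langle p,u\rangle+\eps\, Hf(u)+O(\eps^2+|p|^2),
\]
where $H$ is the hemispherical transform of $f$ (weighted by $r^{n-1}$) and $\kappa_n=\vol_{n-1}(B^{n-1})$. Only the odd part of $f$ affects $Hf(u)-Hf(-u)$, and $H$ restricted to odd functions is injective onto odd functions. So I may prescribe the odd function $g:=Hf$ freely, up to smoothness. The Tukey median is then $p_K=(\eps/\kappa_n)\,q+O(\eps^2)$, where $q$ maximizes the depth of the model problem. Equivalently, $\langle q,u\rangle$ is chosen so that $\min_u\bigl(g(u)-\langle q,u\rangle\bigr)$ is maximized, that is, $q$ is the center of the smallest ``linear cap'' below $g$.

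\textbf{Construction.}
By translating, arrange $q=0$; this means $0$ lies in the convex hull of the minimizers of the odd function $g$. I would choose $g$ to be an odd Morse function on $\Sph^{n-1}$ with exactly $m$ antipodal pairs of critical points, and with a unique global minimum pair structure that forces $q=0$. In $n=2$, take $g(\theta)=\sin(k\theta)$ with $k$ odd, which gives $2k$ critical points, i.e.\ $k$ pairs. To realize every $m\ge3$, including even $m$, I would instead use an odd trigonometric polynomial whose minima, three of them, span $0$ in their convex hull, and then add extra nondegenerate critical pairs by local bumps. In $n\ge3$, Lusternik--Schnirelmann theory for $\mathbb{RP}^{n-1}$ gives at least $n$ pairs, but \cite{PTW} only forces $4$. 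I would therefore start from a perfect-type odd Morse function with $4$ pairs, for example one built on $\Sph^2$ and suspended appropriately. This is the delicate part for large $n$, since Morse theory on $\mathbb{RP}^{n-1}$ forces at least $n$ critical pairs for odd Morse $g$. Hence the count must come from degenerate critical points, or else Dupin-criticality of the full function $V$ differs from criticality of $g$ alone. I would use the fact that barycentric directions are critical points of $V(p,u)$, which is neither even nor odd, at the specific point $p_K$. With $p_K\neq$ the generic point, the minimum structure lets one have few critical points. Bumps then raise the count by one at a time.

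\textbf{Stability and main obstacle.}
Nondegenerate critical points of the first-order model persist for small $\eps$, by the implicit function theorem. The $O(\eps^2)$ corrections, together with the fact that $p_K$ is only known to $O(\eps^2)$, do not change the count, provided $p_K$ does not sit at a point where the model count jumps. Smoothness and positive curvature follow from the $C^2$-closeness of $K$ to $B$.

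The hard step is the $n\ge3$ realization of exactly $4$. At the Tukey median the depth function attains a degenerate max-min, so the critical set of $V(p_K,\cdot)$ contains a minimizing configuration of at least $3$ directions by Helly and Carathéodory considerations. I must show that one can have exactly one further critical pair, and that this count is stable. I would first try an explicit body of revolution about an axis, perturbed by a $\Sph^1$-dependent term. This reduces the count to a planar problem plus the control of the axis directions.
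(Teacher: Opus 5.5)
Your lower bounds are right, and for $n=2$ with $m$ odd (e.g.\ $\sin(k\theta)$, where all critical points are nondegenerate) the perturbative scheme can be made to work. The gap is that your only way of fixing the count is persistence of \emph{nondegenerate} critical points under the $O(\eps^2)$ errors in $V$ and in $p_K$, and the counts you must realize cannot be attained by Morse data.

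First, $V(p,u)+V(p,-u)=\vol(K)$, so $V(p,\cdot)-\tfrac12\vol(K)$ is odd for \emph{every} $p$. There is no escape through $V(p_K,\cdot)$ being ``neither even nor odd''.

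Now let $n\ge3$ and suppose $V(p_K,\cdot)$ were Morse. Its minimizers give at least three distinct hyperplanes \cite{PTW}, so $c_0=c_{n-1}\ge3$. The strong Morse inequality gives $c_1\ge c_0-1\ge2$, and oddness gives $c_{n-2}=c_1$. For $n=3$ the Euler characteristic forces $c_1=c_0+c_2-2\ge4$. In all cases there are at least ten critical points, i.e.\ five pairs. So $\nu(K)=4$ forces a degenerate critical point, which the higher-order terms can split or remove; persistence cannot certify exactly four.

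The same parity problem affects your bumps. A Morse modification supported in a ball where the function is regular creates critical points whose signs $(-1)^{\mathrm{index}}$ sum to zero, hence an even number of them. An antipodal pair of bumps therefore adds critical pairs two at a time. Likewise, an odd Morse function on $\Sph^1$ always has an odd number of critical pairs: maxima and minima alternate, and $u\mapsto-u$ shifts the cyclic list by half its length. Thus even $m$ in the plane, and ``one more pair at a time'' in any dimension, require degenerate (birth--death) points. Incidentally, odd functions do not descend to $\mathbb{RP}^{n-1}$, and $\ip{a}{u}$ is odd and Morse with a single pair, so the obstruction comes from the balanced minimum set, not from Lusternik--Schnirelmann theory. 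Beyond all this, the $n\ge3$ construction itself is left at ``I would first try''.

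The missing idea is \emph{exactness}, which makes degeneracy harmless. Since $D_K=\frac1nT(\rho_K^n)$ is linear in $\rho_K^n$, parametrize by $\rho^n=1+n\eps\,T^{-1}F$ instead of $\rho=1+\eps f$. Then $D_K=\frac{\omega_n}{2}+\eps F$ identically, with no error term, and $C^\infty$-closeness to the ball still gives smoothness and positive curvature.

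The Tukey median is likewise pinned exactly rather than to first order. If $0\in\conv\operatorname{argmin}F$, then for any $p$ some minimizer $u$ satisfies $\ip{p}{u}\ge0$, so $H^+_{p,u}\subseteq H^+_{0,u}$ and $\depth_K(p)\le\depth_K(0)$; hence $p_K=0$. The barycentric count at $p_K$ is then exactly the number of critical pairs of $F$, degenerate ones included.

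So you may take $F_4=a(5r^2-3r^4)y_1+b\operatorname{Re}z^3$ with $b>5a/2$. Its pair $(\pm e_1,0)$ is degenerate, of monkey-saddle type in $z$, and its three minima sit at the cube roots of $-1$. You then insert birth--death critical points antipodally, away from the minima, to reach every $m\ge4$; for $n=2$, use $\cos 3\theta$ plus insertions.
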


In particular, when $n\geq4$, the point of maximal depth need not be incident with $n+1$ barycentric hyperplanes. Thus the maximal-depth strengthening of Conjecture~\ref{conj:grunbaum-general} is false.

We now combine the lower bound of \cite{PTW} with a local degree argument to obtain a uniform free-point result.
\begin{maintheorem}[Theorem \ref{thm:five-cut}]\label{intro-5}
Every convex body $K\subset\R^n$, $n\ge3$, admits a point $p\in\operatorname{int}K$ through which pass at least five barycentric hyperplanes.
\end{maintheorem}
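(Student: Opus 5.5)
We combine the lower bound $\nu(n)\ge4$ of \cite{PTW} with a local degree argument at the Tukey median. Work with the hemispherical (barycentric) field. For $p\in\operatorname{int}K$ and $u\in\Sph^{n-1}$, let $H_{p,u}=p+u^\perp$ and define the tangential defect
\[
F_p(u):=\Bigl(c(K\cap H_{p,u})-p\Bigr)\in u^\perp .
\]
Then $F_p$ is a continuous tangent vector field on $\Sph^{n-1}$, odd under $u\mapsto -u$, and $\mathcal B_K(p)$ corresponds bijectively to antipodal pairs of zeros of $F_p$. By Dupin's theorem, $F_p$ is (up to a positive factor $\vol_{n-1}(K\cap H_{p,u})$) the spherical gradient of the half-volume function $g_p(u)=\vol(K\cap\{\langle x-p,u\rangle\ge0\})$. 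So zeros of $F_p$ are exactly critical points of $g_p$.

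\textbf{Step 1 (reduction).} If $\nu(K)\ge5$ we take $p=p_K$ and are done. Otherwise $\nu(K)=4$ by \cite{PTW} (finiteness is automatic here), so $F_{p_K}$ has exactly eight zeros, all isolated. If $\#\mathcal B_K(p_K)=\infty$ we are also done. Hence assume $p_K$ carries exactly four barycentric hyperplanes.

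\textbf{Step 2 (index bookkeeping).} The field $F_{p_K}$ has total index $\chi(\Sph^{n-1})$, which equals $0$ or $2$. Oddness pairs the zeros, and antipodal zeros have related indices, equal to $(-1)^n$ times each other. At the Tukey median, $g_{p_K}$ attains its global minimum $\depth_K(p_K)\vol(K)$ at some $\pm u_0$. Since the function $p\mapsto g_p(u)$ has derivative $-\vol_{n-1}(K\cap H_{p,u})\,u$, the first-order variation of the minimum value as $p$ moves is controlled by the minimizing directions. Maximality of depth forces these directions to positively span: $0\in\conv$ of the minimizers. This is the mechanism behind \cite{PTW}.

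\textbf{Step 3 (perturbation).} Perturb $p=p_K+\eps v$. Zeros of $F_{p_K}$ with nonzero local index persist nearby by degree theory. The key claim is that some zero at $p_K$ is degenerate in a way that forces it to split. One such zero is a non-strict minimum arising from the tie among minimizers, whose index is nonzero but is not $\pm1$; alternatively it has index $0$ yet is stable in one direction. Such a zero must then produce at least two zeros of $F_p$ for suitable $v$, while the other zero pairs survive. I would argue as follows. The minimizing directions $u_1,\dots,u_k$ of $g_{p_K}$ positively span, so $k\ge2$ pairs (indeed the argument of \cite{PTW} gives their count). Move $p$ in a direction $v$ with $\langle v,u_1\rangle>0$ and $\langle v,u_2\rangle<0$. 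This lowers some local minima and raises others, so the minimum structure of $g_p$ changes while each strict local minimum persists. Counting via local degrees, plus at least one additional saddle-type zero forced by Morse/Lusternik–Schnirelmann on the projective space $\R P^{n-1}$, then gives at least five antipodal pairs. Concretely, $g_p$ descends to $\R P^{n-1}$ and has cuplength category $n\ge3$. The goal is to exhibit, for $p$ near $p_K$, two distinct nondegenerate local minima pairs together with enough further critical pairs to reach five.

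\textbf{Main obstacle.} The hard step is Step 3: guaranteeing the fifth zero. Near $p_K$, generic $p$ has $g_p$ Morse on $\R P^{n-1}$. By the Morse inequalities with $\mathbb Z_2$ Betti numbers all $1$, this gives at least $n$ critical pairs, which is already $\ge5$ for $n\ge5$ but only $3$ or $4$ for $n=3,4$. For $n=3,4$ I would therefore combine two facts. First, at a point $p$ near $p_K$ on a ray where two minima were tied, $g_p$ has at least two local minima: the strict local minima near $u_1$ and $u_2$ persist by index $+1$. Second, applying the Morse inequalities to $\R P^{n-1}$ with $m_0\ge2$ forces $\sum m_i\ge n+2$, since the alternating sums must still match the Betti numbers. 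Indeed $m_0-m_1\le1$ forces $m_1\ge1$ extra, giving total at least $n+2\ge5$. If genericity (Morse-ness) fails, I would handle it by approximation: a sequence of nondegenerate critical points converging to distinct limits, or a degenerate zero, still contributes. The delicate point is ensuring that the limits remain distinct, and I would try to control this using finiteness at nearby $p$.
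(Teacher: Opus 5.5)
There is a genuine gap, and it goes deeper than the genericity issue you flag: the counting in Step~3 rests on a false premise.

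The function $g_p$ does not descend to $\R P^{n-1}$, because $g_p(-u)=\vol(K)-g_p(u)$. It is $g_p-\tfrac12\vol(K)$ that is odd, and only its critical set is antipodally invariant. Similarly, $F_p(-u)=F_p(u)$, since $F_p$ depends only on the unoriented hyperplane. So $F_p$ is a section of the bundle with fibre $u^\perp$ over $\R P^{n-1}$, not a tangent field there.

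Odd functions can have a single critical pair. For example, for the Euclidean ball and any interior $p\neq0$, the only barycentric hyperplane through $p$ is $p+p^\perp$. Hence neither the Lusternik--Schnirelmann category nor the $\mathbb Z_2$ Morse inequalities of $\R P^{n-1}$ constrain $g_p$. Both ``at least $n$ critical pairs for generic $p$'' and ``$m_0\ge2\Rightarrow\sum m_i\ge n+2$'' are unfounded. What does hold is a parity statement: the barycentric hyperplanes through any interior point, counted with local mod-$2$ degree, are odd in number (Lemma~\ref{lem:global-centroid-degree}).

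Your count could be repaired on $\Sph^{n-1}$ instead. At $p_K$ the minimizers are isolated, and since they are balanced with no antipodal pair there are at least three of them, not two. They persist as local minima for $p$ near $p_K$, and their antipodes persist as local maxima. If $g_p$ were Morse, the Morse inequalities on the sphere would then force at least four more critical points, i.e.\ five pairs. But $g_p$ is only $C^1$ for a general body. Approximating by smooth bodies runs into exactly the possible collapse of limits that you leave unresolved.

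The missing idea is a degree argument valid for continuous maps, applied to the right point. The degenerate point is not a tied minimizer: isolated global minimizers are strict local minima of local degree $1$ (Lemma~\ref{lem:minimizing-degree}). It is the fourth, non-depth-realizing hyperplane $H_4$. The parity count $1\equiv1+1+1+\deg_2(\Gamma_K,H_4;p_K)\pmod 2$ forces its local degree to be $0$.

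The paper then treats $p$ as the \emph{target} of the section-centroid map $\Gamma_K\colon H\mapsto c(K\cap H)$ on the $n$-manifold $\mathcal A_K$. By invariance of domain, an isolated preimage of local degree $0$ prevents $\Gamma_K$ from being injective near $H_4$. So two distinct hyperplanes near $H_4$ share an image $p$ arbitrarily close to $p_K$, while the three degree-one branches persist over that same $p$. This gives five hyperplanes through $p$ with no smoothness or genericity needed.

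A minor point: antipodal indices differ by the factor $(-1)^{n-1}$, not $(-1)^n$.
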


As an immediate corollary, combining Theorem \ref{intro-5} and \cite{PTW},we also have the following positive result for the original free-point conjecture in low dimensions.

\begin{maincorollary}[Corollary \ref{cor:grunbaum-four}]\label{intro-conj}

Conjecture \ref{conj:grunbaum-general} is true in dimensions $2\le n\le 4$. 
\end{maincorollary}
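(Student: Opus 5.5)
The statement is Corollary~\ref{intro-conj}, and I would prove it by splitting into low dimensions and dimension four, using only \cite{PTW} and Theorem~\ref{intro-5}.

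\emph{Dimensions $n=2$ and $n=3$.} Here it suffices to take $p=p_K$, the Tukey median. The bound $\nu(2)\ge3$ of \cite{PTW} gives $\#\mathcal B_K(p_K)\ge 3=n+1$ for every planar convex body. Likewise $\nu(3)\ge4$ gives $\#\mathcal B_K(p_K)\ge 4=n+1$ for every $K\in\mathcal K^3$. Both bounds are also recorded in Theorem~\ref{thm:intro-spectrum}.

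\emph{Dimension $n=4$.} Apply Theorem~\ref{intro-5} with $n=4$. It produces a point $p\in\operatorname{int}K$ with $\#\mathcal B_K(p)\ge5=n+1$, which is exactly the conclusion of Conjecture~\ref{conj:grunbaum-general}. The point $p$ need not be $p_K$. This is unavoidable: by Theorem~\ref{thm:intro-spectrum} there are bodies in $\mathbb R^4$ with $\nu(K)=4$, so the Tukey median alone cannot work.

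The only point needing care is that hyperplanes in $\mathcal B_K(p)$ are counted without orientation in both cited results, so the counts are comparable. All the substantive work lies in Theorem~\ref{intro-5}, which we take as given; there is no real obstacle at the level of the corollary.
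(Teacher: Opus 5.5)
Your proposal is correct and follows the paper's own route: the bounds $\nu(2)\ge3$ and $\nu(3)\ge4$ from \cite{PTW} settle $n=2,3$ at the Tukey median, and Theorem~\ref{thm:five-cut} with $n=4$ gives a point on $5=n+1$ barycentric hyperplanes. Your remark that the Tukey median cannot suffice in dimension four (since $\nu(4)=4$) also matches the paper.
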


Finally, we prove a centered exact-realization theorem in Section \ref{sec:centered}. In contrast with the perturbative realization used in \cite{XiongYang}, the prescribed halfspace-volume functional and the centroid constraint are both satisfied exactly. We also show that the absence of the degree-one spherical-harmonic component is necessary for differentiable near-ball realizations.

\medskip
\noindent
\textbf{Organization of the paper.}
Section~\ref{sec:profiles} recalls the hemispherical transform and the critical-point characterization of barycentric sections, and proves the exact realization theorem. Section~\ref{sec:maximal-depth} proves the sharp four-cut theorem and determines the finite cut spectrum. Section~\ref{sec:five-cut} derives the existence of a point on at least five barycentric hyperplanes in every dimension \(n\ge3\). Section~\ref{sec:centered} establishes the centered exact realization theorem and its converse.

\section{Halfspace volume functional and exact realization}\label{sec:profiles}
We now translate the geometric section problem into an analytic problem on the sphere. The relevant object is the halfspace volume functional and its representation by the hemispherical transform.
\subsection{The hemispherical transform and barycentric sections}
We first recall the distinguished point at which the cut number will be measured. 

For $p\in\R^n$ and $\xi\in\Sph^{n-1}$, set
\[
 H^+_{p,\xi}
   :=\{x\in\R^n:\ip{x-p}{\xi}\geq 0\},
 \qquad
 V_{K,p}(\xi):=\vol(K\cap H^+_{p,\xi}).
\]
Thus
\[
    \operatorname{depth}_K(p)
    =
    \frac{1}{\operatorname{vol}(K)}
    \min_{\xi\in\mathbb S^{n-1}}V_{K,p}(\xi).
\]
We retain the notation \(p_K\) for the unique Tukey median introduced in the Introduction. Translations reduce questions at $p$ to questions at the origin. When $0\in\operatorname{int}K$, we abbreviate
\(
             D_K(\xi):=V_{K,0}(\xi).
\)

For the rest of this paper, let $\dd\sigma$ denote spherical Lebesgue measure on $\Sph^{n-1}$, and let $\omega_n=\vol(B^n)$, so that $\sigma(\Sph^{n-1})=n\omega_n$.

\begin{definition}
A \emph{spherical harmonic of degree $k$} is the restriction to $\Sph^{n-1}$ of a homogeneous harmonic polynomial of degree $k$ on $\R^n$. The degree-one spherical harmonics are precisely $u\mapsto\ip{a}{u}$, $a\in\R^n$. The degree-one component of $f\in L^2(\Sph^{n-1})$ vanishes if its orthogonal projection onto this space is zero, equivalently,
\[
                \int_{\Sph^{n-1}}u f(u)\dd\sigma(u)=0.
\]
\end{definition}

\begin{definition}
For $f\in L^1(\Sph^{n-1})$, its \emph{hemispherical transform} is
\[
 (Tf)(\xi)
   :=\int_{\{u\in\Sph^{n-1}:\ip{u}{\xi}\geq 0\}}
          f(u)\dd\sigma(u).
\]
\end{definition}

The following multiplier and range properties are the only facts about the hemispherical transform needed below.

\begin{theorem}[{\cite[(3.19) and Theorem~5.1]{Rubin14}, see also \cite{Rubin1999}}]\label{thm:rubin}
For \(n\ge2\), the hemispherical transform is invertible on
$C^\infty_{\mathrm{odd}}(\Sph^{n-1})$.  It preserves every odd spherical harmonic subspace and has a nonzero eigenvalue on each of them. In particular, it preserves the condition that the degree-one component vanish.
\end{theorem}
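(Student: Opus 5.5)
Since the kernel $\mathbf 1_{\{\ip{u}{\xi}\ge0\}}$ depends only on $\ip{u}{\xi}$, the transform $T$ commutes with rotations. I would therefore diagonalize $T$ with the Funk--Hecke formula, show that each odd eigenvalue is nonzero with at most polynomial decay, and then read off invertibility and the degree-one statement from the harmonic expansion.

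\emph{Eigenvalues.} By the Funk--Hecke theorem, for every spherical harmonic $Y_k$ of degree $k$,
\[
TY_k=\lambda_k Y_k,
\qquad
\lambda_k=\sigma(\Sph^{n-2})\int_0^1 P_k(t)\,(1-t^2)^{\frac{n-3}{2}}\dd t,
\]
where $P_k$ is the Legendre (Gegenbauer) polynomial in dimension $n$, normalized by $P_k(1)=1$. Hence every harmonic subspace, and in particular every odd one, is invariant. Since the kernel is symmetric in $(u,\xi)$, $T$ is self-adjoint on $L^2(\Sph^{n-1})$. It therefore also preserves the orthogonal complement of the degree-one harmonics, which is exactly the statement that it preserves vanishing of the degree-one component.

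\emph{Nonvanishing of $\lambda_k$ for odd $k$.} This is the main obstacle. I would use the Rodrigues formula
\[
P_k(t)(1-t^2)^{\frac{n-3}{2}}=c_{k,n}\frac{d^k}{dt^k}(1-t^2)^{k+\frac{n-3}{2}},
\qquad c_{k,n}\neq0.
\]
Integrating over $[0,1]$ leaves only boundary terms. The $(k-1)$-st derivative vanishes at $t=1$, since the exponent $k+\frac{n-3}{2}$ exceeds $k-1$. At $t=0$ the contribution is $(k-1)!$ times the coefficient of $t^{k-1}$ in the binomial expansion of $(1-t^2)^{k+\frac{n-3}{2}}$. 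For odd $k$ the index $k-1$ is even, so this coefficient is a generalized binomial coefficient $\binom{k+\frac{n-3}{2}}{(k-1)/2}$ up to sign. It is nonzero because $k+\frac{n-3}{2}>\frac{k-1}{2}-1$.

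This computation gives a closed form of the type $|\lambda_k|\asymp\Gamma(\tfrac k2)/\Gamma(\tfrac{k+n}{2})$. By Stirling, $|\lambda_k|^{-1}=O(k^{n/2})$, and I expect some bookkeeping with constants at this step.

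\emph{Invertibility on $C^\infty_{\mathrm{odd}}$.} An odd function $f$ has only odd-degree components. Smoothness is equivalent to its harmonic components $f_k$ having $L^2$ norms that decay faster than any power of $k$. Since $T$ multiplies each odd component by $\lambda_k\neq0$, it is injective on odd functions. For smooth odd $g$, the series $\sum_k\lambda_k^{-1}g_k$ again has rapidly decaying components, because $|\lambda_k|^{-1}$ grows only polynomially. It therefore defines a smooth odd $f$ with $Tf=g$. Hence $T$ is a bijection of $C^\infty_{\mathrm{odd}}(\Sph^{n-1})$, which completes the statement.
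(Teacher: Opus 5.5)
Your proposal is correct and follows the standard route. The paper gives no proof of its own; it cites Rubin for exactly these multiplier and range properties, namely the Funk--Hecke eigenvalues on odd harmonics and the resulting bijectivity on $C^\infty_{\mathrm{odd}}(\Sph^{n-1})$, and your argument reconstructs them in a self-contained way. The bookkeeping you anticipated closes up cleanly: your Rodrigues computation gives $\lambda_k=(-1)^{(k-1)/2}\pi^{(n-2)/2}\Gamma(\tfrac k2)/\Gamma(\tfrac{n+k}{2})$ for odd $k$ (for instance $\lambda_1=\pi$ when $n=3$), so $|\lambda_k|^{-1}=O(k^{n/2})$, as you predicted.
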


If $h$ is even, antipodal pairing gives
\begin{equation}\label{eq:T-even}
             Th=\frac12\int_{\Sph^{n-1}}h\dd\sigma.
\end{equation}
Thus every smooth even mean-zero function lies in $\ker T$.

The following proposition provides the basic bridge between halfspace-volume and barycentric sections.

\begin{proposition}[{\cite[Proposition~1.11]{PTW}; \cite[Lemma~2.1]{XiongYang}}]
\label{prop:known-bridge}
Let $K$ be a convex body with $0\in\operatorname{int}K$. Then \(D_K\in C^1(\mathbb S^{n-1})\), and
\begin{equation}\label{eq:profile-transform}
 D_K(\xi)
   =\frac1n\int_{\ip{u}{\xi}\geq 0}\rho_K(u)^n\dd\sigma(u)
   =\frac1nT(\rho_K^n)(\xi).
\end{equation}
Moreover, $\xi$ is a critical point of $D_K$ if and only if
$\xi^\perp$ is barycentric at the origin.
\end{proposition}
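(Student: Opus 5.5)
We prove the formula for $D_K$ by polar coordinates, the $C^1$ regularity by differentiating under the integral, and the critical-point statement by computing the tangential gradient of $D_K$ as a first moment of the section $K\cap\xi^\perp$.

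\textbf{Step 1: the integral formula.} Write $K=\{ru: u\in\Sph^{n-1},\ 0\le r\le\rho_K(u)\}$, where $\rho_K$ is the radial function; it is positive and Lipschitz because $0\in\operatorname{int}K$. The halfspace $H^+_{0,\xi}$ is a cone through the origin, so $x=ru$ lies in it exactly when $\ip{u}{\xi}\ge0$. Polar coordinates then give
\[
D_K(\xi)=\int_{\ip{u}{\xi}\ge0}\int_0^{\rho_K(u)}r^{n-1}\dd r\dd\sigma(u)=\frac1n\,T(\rho_K^n)(\xi).
\]
This proves \eqref{eq:profile-transform}.

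\textbf{Step 2: the derivative.} Let $\xi(t)$ be a smooth curve on $\Sph^{n-1}$ with $\xi(0)=\xi$ and $\xi'(0)=\eta\perp\xi$. Rather than differentiate the spherical integral directly, work in $\R^n$:
\[
D_K(\xi(t))=\int_K \mathbf 1\{\ip{x}{\xi(t)}\ge0\}\dd x.
\]
Rotating $\xi$ toward $\eta$ tilts the hyperplane $\xi^\perp$ about the codimension-two subspace $\xi^\perp\cap\eta^\perp$. The volume swept out is, to first order, a signed wedge. A point $y\in K\cap\xi^\perp$ lies at signed distance $\ip{y}{\eta}$ from the hinge, so the wedge has thickness $t\ip{y}{\eta}$ there. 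Applying the coarea formula with respect to the function $x\mapsto\ip{x}{\xi}$ gives
\[
\frac{\dd}{\dd t}\Big|_{t=0}D_K(\xi(t))=\int_{K\cap\xi^\perp}\ip{y}{\eta}\dd\vol_{n-1}(y).
\]
Hence the tangential gradient of $D_K$ at $\xi$ is the first moment of the section,
\[
\nabla D_K(\xi)=\int_{K\cap\xi^\perp}y\dd\vol_{n-1}(y)=\vol_{n-1}(K\cap\xi^\perp)\,c(K\cap\xi^\perp),
\]
which is a vector in $\xi^\perp$.

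To make this rigorous, write $y=s\xi+z$ with $z\in\xi^\perp$. For small $t$, the region between the two hyperplanes is $\{x\in K:\ 0\le\ip{x}{\xi}\le -t\ip{z}{\eta}+O(t^2|x|)\}$, up to sign. Integrate in $s$ first, then in $z$. The inner integral is of order $t$. Its error term is controlled by the Lipschitz continuity of $z\mapsto$ the length of the fiber of $K$ near $s=0$. This regularity holds because $0$ is interior, so the hyperplane $\xi^\perp$ meets $\operatorname{int}K$ and $K$ is transversal to it.

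\textbf{Step 3: continuity and the critical-point criterion.} The section $K\cap\xi^\perp$ varies continuously in the Hausdorff metric, again because $0\in\operatorname{int}K$. Therefore its moment $\int_{K\cap\xi^\perp}y\dd\vol_{n-1}$ depends continuously on $\xi$, and $D_K\in C^1(\Sph^{n-1})$.

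Since $0\in\operatorname{int}K$, we have $\vol_{n-1}(K\cap\xi^\perp)>0$. So $\nabla D_K(\xi)=0$ if and only if $c(K\cap\xi^\perp)=0$, that is, if and only if $\xi^\perp\in\mathcal B_K(0)$.

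\textbf{Main obstacle.} The delicate part is justifying the first-order wedge computation in Step 2 uniformly in $\xi$, including the $O(t^2)$ error bound, without assuming any smoothness of $\partial K$. I would handle it with the coarea formula together with the Lipschitz dependence of the chord lengths of $K$ near the hyperplane.
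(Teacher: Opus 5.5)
The paper gives no proof of its own here, only the citations \cite[Proposition~1.11]{PTW} and \cite[Lemma~2.1]{XiongYang}; your argument (polar coordinates for \eqref{eq:profile-transform}, then fiberwise differentiation of the halfspace volume identifying the tangential gradient at $\xi$ with the first moment $\int_{K\cap\xi^\perp}y\,\dd\vol_{n-1}(y)$) is correct and is the standard route behind those citations---it is the same computation as the gradient formula $\nabla f(y)=-A(y)(1+|y|^2)^{-1/2}r(y,0)$ that the paper itself quotes from \cite[(5)]{PTW} in Lemma~\ref{lem:minimizing-degree}, with your rotation in place of the graph parametrization. The step you flag as the main obstacle is easier than you expect: for $\xi(t)=\cos t\,\xi+\sin t\,\eta$ the rotated halfspace is exactly $\{s\ge-\tan t\,\langle z,\eta\rangle\}$, so, with $[a(z),b(z)]$ the fiber of $K$ over $z\in\xi^\perp$, the integrand $\bigl(b(z)-\max\{a(z),-\tan t\,\langle z,\eta\rangle\}\bigr)_+$ is $|z|$-Lipschitz in $\tan t$ and differentiable at $t=0$ unless $0\in\{a(z),b(z)\}$, which forces $z\in\partial K\cap\xi^\perp$, a $\vol_{n-1}$-null set because $0\in\operatorname{int}K$; dominated convergence then gives the derivative directly, with no $O(t^2)$ error bound, no Lipschitz estimate on fiber lengths, and no uniformity in $\xi$ (continuity of the gradient is supplied by your Step~3).
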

Notice that $\xi$ and $-\xi$ determine the same unoriented hyperplane, and that
\[
        D_K(-\xi)=\vol(K)-D_K(\xi).
\]

\subsection{Exact realization near the Euclidean ball}
Because the functional is linear in $q=\rho_K^n$, the preceding identity allows one to prescribe it exactly near the Euclidean ball. Here and below, $K_\varepsilon\to B^n$ in $C^\infty$ means that $\rho_{K_\varepsilon}\to1$ in $C^\infty(\mathbb S^{n-1})$. 
%加句话说明这里和熊-杨的差一个高阶小
\begin{proposition}
\label{thm:exact-local}
Let $n\geq 2$ and $F\in C^\infty_{\mathrm{odd}}(\Sph^{n-1})$. There is $\eps_0>0$ such that, for every $0<|\eps|<\eps_0$, there is a smooth, positively curved convex body $K_\eps$ satisfying
\[
 \vol(K_\eps)=\omega_n,\qquad
 K_\eps\longrightarrow B^n\quad\text{in }C^\infty,
\]
and for every $\xi\in\Sph^{n-1}$,
\begin{equation}\label{eq:exact-profile}
             D_{K_\eps}(\xi)
               =\frac{\omega_n}{2}+\eps F(\xi).
\end{equation}
More explicitly, if $g=T^{-1}F$, one may take
\begin{equation}\label{eq:exact-radial}
       \rho_{K_\eps}(u)
          =\bigl(1+n\eps g(u)\bigr)^{1/n}.
\end{equation}
\end{proposition}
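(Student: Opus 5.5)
We construct $K_\eps$ directly from the explicit radial function \eqref{eq:exact-radial} and check each required property in turn. By Theorem~\ref{thm:rubin}, $T$ is invertible on $C^\infty_{\mathrm{odd}}(\Sph^{n-1})$, so $g:=T^{-1}F$ is a well-defined smooth odd function. For $|\eps|$ small we have $\|n\eps g\|_{C^0}<1/2$. Hence $q_\eps:=1+n\eps g$ is smooth and positive, and $\rho_\eps:=q_\eps^{1/n}$ is a smooth positive function on $\Sph^{n-1}$. Since $\rho_\eps-1=O(\eps)$ in every $C^k$ norm (the $C^k$ norms of $g$ are fixed), $\rho_\eps\to1$ in $C^\infty$.

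We then define $K_\eps:=\{ru: u\in\Sph^{n-1},\ 0\le r\le\rho_\eps(u)\}$, a compact star body with smooth boundary. We need it to be convex, smooth, and positively curved. The boundary is the image of the embedding $u\mapsto\rho_\eps(u)u$, which is a $C^\infty$-small perturbation of the unit sphere. The second fundamental form of a radial graph depends continuously on the $2$-jet of $\rho$. At $\rho\equiv1$ all principal curvatures equal $1$, so for $\|\rho_\eps-1\|_{C^2}$ small all principal curvatures stay above $1/2$. A compact connected hypersurface with everywhere positive curvature bounding a domain is the boundary of a convex body (Hadamard's theorem). Therefore $K_\eps$ is a smooth, positively curved convex body containing $0$ in its interior. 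This is where we choose $\eps_0$, and it is the step needing the most care, although it is a standard openness statement and needs no estimate beyond smallness in $C^2$.

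Next we verify the volume and the halfspace profile. Polar coordinates give $\vol(K_\eps)=\frac1n\int\rho_\eps^n\dd\sigma=\frac1n\int(1+n\eps g)\dd\sigma=\omega_n+\eps\int g\dd\sigma$. Because $g$ is odd, $\int g\dd\sigma=0$, so $\vol(K_\eps)=\omega_n$. For the profile, Proposition~\ref{prop:known-bridge} gives $D_{K_\eps}=\frac1nT(\rho_\eps^n)=\frac1nT(1)+\eps\,Tg$. By \eqref{eq:T-even} applied to the even function $1$, $T(1)=\frac12 n\omega_n$, so $\frac1nT(1)=\omega_n/2$. Also $Tg=F$ by the choice of $g$. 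Together these yield \eqref{eq:exact-profile} exactly, with no higher-order error.

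Finally, the construction works for negative $\eps$ as well, since only $|\eps|<\eps_0$ was used. The key point is that $D_K$ is \emph{linear} in $\rho_K^n$, so choosing $\rho^n$ to be exactly affine in $\eps$ makes the profile exact. The only genuinely non-algebraic ingredient is the convexity and positive curvature of a $C^2$-small radial perturbation of the ball.
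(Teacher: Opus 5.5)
Your proof is correct and follows the paper's argument essentially step for step. As in the paper, you take $g=T^{-1}F$ and set $\rho_\eps=(1+n\eps g)^{1/n}$, obtain positive curvature from $C^2$-closeness to the sphere, and read off the volume and the exact profile from the oddness of $g$ and the linearity of $D_K$ in $\rho_K^n$. Your explicit appeal to Hadamard's theorem (legitimate here, since the radial graph is embedded) spells out the convexity step that the paper leaves to the openness of positive curvature and its cited references.
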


\begin{proof}
By Theorem~\ref{thm:rubin}, $g=T^{-1}F$ is smooth and odd. For sufficiently small $|\eps|$, the function $q_\eps:=1+n\eps g$ is positive. Define $\rho_\eps=q_\eps^{1/n}$ and let $K_\eps$ be its radial body. Since $\rho_\eps\to 1$ in $C^\infty$, the radial hypersurface $\rho_\eps(u)u$ is a $C^\infty$-small perturbation of the unit sphere. 

The second fundamental form depends continuously on the embedding and its first two derivatives, hence the second fundamental form $A^{K_\eps}\to A^{\mathbb S^{n-1}}$ in $C^0$. The unit sphere has positive principal curvatures \(\kappa_i=1\), and positive definiteness is preserved under sufficiently small perturbations. Therefore, for sufficiently small \(|\varepsilon|\), all principal curvatures of \(\partial K_\varepsilon\) remain positive, so \(K_\varepsilon\) is a smooth positively curved convex body. For similar discussion, see \cite[Lemma~2.4]{XiongYang} or \cite{Schneider}.

Oddness of $g$ gives
\[
 \vol(K_\eps)
  =\frac1n\int_{\Sph^{n-1}}q_\eps\dd\sigma=\frac{1}{n}\sigma({\mathbb S^{n-1}})+\eps \int_{\mathbb S^{n-1}}g \dd\sigma=\omega_n.
\]
Finally, \eqref{eq:profile-transform} and $Tg=F$ give the identity
\[
 D_{K_\eps}
  =\frac1nT(1+n\eps g)
  =\frac{\omega_n}{2}+\eps F.
\]\end{proof}

\section{Barycentric cuts at the Tukey median}\label{sec:maximal-depth}

\subsection{The sharp four-cut construction}
We now impose the additional requirement that the prescribed origin be the Tukey median of the realizing body.
%说一下这个是minimum criterion

\begin{lemma}\label{lem:balance}
Let $0\in \operatorname{int} K$, and suppose $D_K(\xi)=C+\eps F(\xi)$, where $\eps>0$, and $F$ continuous. Let $U=\operatorname{argmin}_{\mathbb S^{n-1}}F=\{u:F(u)=\min_{\Sph^{n-1}}F\}$. If $0\in\conv U$, then it is a maximal depth point of $K$.
\end{lemma}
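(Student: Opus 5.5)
We want to show that $\depth_K(0)\ge\depth_K(p)$ for every $p\in\operatorname{int}K$. Equivalently, after multiplying by $\vol(K)$, we must show that for every $p$ there is a direction $\xi$ with
\[
V_{K,p}(\xi)\le \min_{\Sph^{n-1}} D_K = C+\eps m,
\qquad m:=\min F .
\]
The plan is to pick $\xi\in U$ according to the position of $p$. The key identity is that the minimal value of $D_K$ is attained at every $u\in U$, since $D_K=C+\eps F$ with $\eps>0$ gives $\operatorname{argmin}D_K=U$.

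\textbf{Step 1: choosing the direction.} Since $0\in\conv U$, write $0=\sum_i\lambda_i u_i$ with $u_i\in U$, $\lambda_i\ge0$ and $\sum_i\lambda_i=1$. Then
\[
\sum_i\lambda_i\ip{p}{u_i}=\ip{p}{0}=0,
\]
so some $u=u_i\in U$ satisfies $\ip{p}{u}\ge0$.

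\textbf{Step 2: monotonicity.} For this $u$, the halfspace through $p$ is contained in the one through the origin:
\[
H^+_{p,u}=\{x:\ip{x}{u}\ge\ip{p}{u}\}\subseteq\{x:\ip{x}{u}\ge0\}=H^+_{0,u}.
\]
Hence $V_{K,p}(u)\le V_{K,0}(u)=D_K(u)=\min D_K$.

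\textbf{Step 3: conclusion.} By Step 2,
\[
\vol(K)\depth_K(p)=\min_\xi V_{K,p}(\xi)\le V_{K,p}(u)\le\min D_K=\vol(K)\depth_K(0).
\]
So the origin maximizes depth. By the uniqueness of the Tukey median recalled in the Introduction, $0=p_K$.

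The only point needing care is that the minimum of $D_K$ is attained exactly on $U$, which requires $\eps>0$. With $\eps<0$ the argmin would be that of $-F$, so the sign hypothesis is essential. Continuity of $F$ ensures that $U$ is nonempty and compact, so that $\conv U$ makes sense and the minima are attained. Otherwise the argument is elementary.
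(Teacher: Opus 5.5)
Your proof is correct and follows the paper's argument step for step. You use the convex combination to find a minimizer $u\in U$ with $\langle p,u\rangle\ge 0$, then the inclusion $H^+_{p,u}\subseteq H^+_{0,u}$ to get $\vol(K)\depth_K(p)\le D_K(u)=\min D_K$, and you finish with uniqueness of the Tukey median.
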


\begin{proof}
Fix $p\in\operatorname{int}K$. Since $0\in\conv U$, there exist finitely many $u_1,\cdots, u_m\in U$ and $\lambda_i>0$ with $\sum_i \lambda_i=1$, such that $\sum_{i=1}^m \lambda_iu_i=0$. Taking the inner product with $p$ gives $\sum_{i=1}^m \lambda_i\langle p,u_i\rangle=0$. Hence at least one \(u_i\) satisfies \(\langle p,u_i\rangle\ge0\). Then
\begin{equation}\label{eq: inclusion of half spaces}
   H^+_{p,u}=\{x:\ip{x}{u}\geq\ip{p}{u}\}
       \subseteq \{x:\ip{x}{u}\geq 0\}=H^+_{0,u}.
\end{equation}
Consequently,
\[
 \vol(K)\depth_K(p)
   \leq \vol(K\cap H^+_{p,u})
   \leq D_K(u)
   =\min_\xi D_K(\xi)
   =\vol(K)\depth_K(0).
\]The first inequality follows by definition, the second comes from \eqref{eq: inclusion of half spaces}, and the third equality is by our choice of $u$. Thus $0$ maximizes the halfspace depth. By the uniqueness of the Tukey median, $p_K=0$.
\end{proof}
It therefore remains to construct a smooth odd function with exactly four antipodal pairs of critical points and a balanced set of global minimizers. To this end, we use an explicit polynomial variant of the depth-like construction introduced in \cite[Section~5]{PTW}. The additional ingredient in our argument is to realize this function exactly as a perturbation of the halfspace-volume functional of a smooth, positively curved convex body, while ensuring that the origin remains its Tukey median.

\begin{proposition}
\label{prop:smooth-four}
Let $n\geq 3$, write
\begin{equation}\label{eq: decomposition}
       \R^n=\R^{\,n-2}_y\oplus\R^2_z,
       \qquad r=\lVert y\rVert,
\end{equation}
and identify $z=(z_1,z_2)$ with $z_1+iz_2$.  Choose $a>0$ and
$b>5a/2$.  The restriction to $\Sph^{n-1}$ of
\begin{equation}\label{eq:F4}
 F_4(y,z)
   :=a(5r^2-3r^4)y_1+b\,\operatorname{Re}z^3
\end{equation}
is smooth, odd and has exactly four antipodal critical pairs:
\[
 (\pm e_1,0)
 \quad\text{and}\quad
 (0,e^{ik\pi/3}),\qquad k=0,\ldots,5.
\]
Its three global minima are
\begin{equation}\label{eq:balanced-minima}
 U_4=
 \left\{(0,e^{i(2j+1)\pi/3}):j=0,1,2\right\}.
\end{equation}
\end{proposition}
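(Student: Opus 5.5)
The plan is to treat the critical-point problem for $F_4|_{\Sph^{n-1}}$ by Lagrange multipliers: $x=(y,z)\in\Sph^{n-1}$ is critical iff $\nabla F_4(x)=\lambda x$ for some $\lambda\in\R$. Smoothness is clear since $F_4$ is a polynomial. Oddness follows because $r^2$ is even while $y_1$ and $\operatorname{Re}z^3$ are odd. I will write out the gradient in the splitting \eqref{eq: decomposition}. For the $y$-components, with $j=1,\dots,n-2$,
\[
\partial_{y_j}F_4=a(10-12r^2)\,y_1y_j+\delta_{j1}\,a(5r^2-3r^4).
\]
For the $z$-component, identified with a complex number, $\nabla_z(b\operatorname{Re}z^3)=3b\,\bar z^{2}$. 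If $z\neq0$, the equation $3b\bar z^2=\lambda z$ forces $z^3\in\R\setminus\{0\}$, that is $3\arg z\in\pi\mathbb Z$, and $\lambda=3b\,s\,|z|$ with $s=\cos(3\arg z)=\pm1$. If $z=0$, the $z$-equation is automatic.

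Next I split into three cases. In the first case, $z=0$ and $r=1$. Then $F_4=2a\,y_1$ on the unit sphere of $\R^{n-2}$, whose only critical points are $y=\pm e_1$. These are genuine critical points, since the multiplier absorbs the normal component. In the second case, $y=0$ and $r=0$. All $y$-equations hold because $5r^2-3r^4=0$, and $F_4=b\operatorname{Re}z^3$ on the unit circle has exactly the six critical points $e^{ik\pi/3}$. In the third case, $y\neq0$ and $z\neq0$, so $0<r<1$; here the aim is a contradiction, and this is the only substantive step. If $y_1=0$, the $j=1$ equation reads $a(5r^2-3r^4)=0$, which is impossible for $0<r<1$. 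Hence $y_1\neq0$. If some $y_j\neq0$ with $j\ge2$, the $j$-th equation gives $\lambda=a(10-12r^2)y_1$. Substituting into the $j=1$ equation yields again $5r^2-3r^4=0$, a contradiction. So $y=y_1e_1$ with $|y_1|=r$. The $j=1$ equation becomes $15a\,r^2(1-r^2)=\lambda y_1$. Taking absolute values and using $|\lambda|=3b|z|=3b\sqrt{1-r^2}$ gives
\[
5a\,r\sqrt{1-r^2}=b .
\]
Since $r\sqrt{1-r^2}\le\tfrac12$, the left side is at most $5a/2<b$, a contradiction. This is exactly where the hypothesis $b>5a/2$ is used. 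Hence the twelve points listed, namely $(\pm e_1,0)$ and $(0,e^{ik\pi/3})$ for $k=0,\dots,5$, are all the critical points, and they form four antipodal pairs.

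Finally, since $\Sph^{n-1}$ is compact, the global minimum of $F_4$ is attained at a critical point, so it suffices to compare the critical values. At $(\pm e_1,0)$ the value is $\pm2a$. At $(0,e^{ik\pi/3})$ the value is $b\cos(k\pi)=\pm b$. Because $b>5a/2>2a$, the minimum value is $-b$. It is attained exactly where $\cos 3\theta=-1$, namely at $\theta=\pi/3,\pi,5\pi/3$, which gives $U_4$ as in \eqref{eq:balanced-minima}. I also note for later use that these three points sum to $0$, so $0\in\conv U_4$, as Lemma~\ref{lem:balance} requires. The main obstacle is excluding mixed critical points in the third case, and the bound $r\sqrt{1-r^2}\le\frac12$ is what disposes of it.
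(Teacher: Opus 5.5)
Your proof is correct and follows essentially the paper's argument: the same split into the strata $z=0$, $y=0$, and the mixed region $0<r<1$, with the mixed case excluded by the identical relation $b=5a\,r\sqrt{1-r^2}$ against $r\sqrt{1-r^2}\le\frac12<\frac{b}{5a}$. The only difference is in the coordinates: you use Lagrange multipliers in Cartesian coordinates, whereas the paper separates variables via $y=r\eta$, $z=se^{i\theta}$.
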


\begin{proof}
The formula is the restriction of a smooth polynomial and is odd by direct inspection. Put $s=\lVert z\rVert$, so $r^2+s^2=1$. When $r,s>0$, write $y=r\eta$, $\eta\in\Sph^{n-3}$, and $z=se^{i\theta}$. Then
\[
 F_4=a(5r^3-3r^5)\eta_1+bs^3\cos(3\theta).
\]

Since \(5r^3-3r^5=r^3(5-3r^2)>0\), the critical point condition in the \(\eta\)-direction is the critical point condition for the height function
\(
\eta\mapsto \eta_1
\) on the sphere \(S^{n-3}\). Only possibilities are $\eta=\sigma e_1$, $\sigma\in\{-1,1\}$. Similarly, differentiating \(\theta\), we obtain
\(
\partial_\theta F_4=-3bs^3\sin(3\theta)
\). Therefore 
\(
\cos(3\theta)=c\in\{-1,1\}
\). For \(n=3\), \(S^{n-3}=S^0=\{\pm1\}\), thus the same conclusion holds trivially.

Differentiate in $r$, with $s=(1-r^2)^{1/2}$.  Since
\[
 \frac{\dd}{\dd r}(5r^3-3r^5)=15r^2s^2,
 \qquad
 \frac{\dd}{\dd r}s^3=-3rs,
\]
the radial critical equation is
\[
       15a\sigma r^2s^2-3bcrs=0,
       \qquad\text{hence}\qquad
       bc=5a\sigma rs.
\]
But $rs\leq 1/2$ while $b>5a/2$, so no mixed critical point of $r,s$ both positive exists.

It remains to consider the cases $r=0$ and $s=0$. Suppose first that $r=0$. Then $y=0$, $|z|=1$, \(F_4=b\cos(3\theta)\), and
\[
T_{(0,z)}S^{n-1}
=
\mathbb R^{n-2}\oplus z^\perp .
\]
Since
\[
\dd_y\!\left[(5|y|^2-3|y|^4)y_1\right]_{y=0}=0,
\]
the differential of $F_4$ vanishes in all transverse $y$-directions. Hence $(0,z)$ is critical on $S^{n-1}$ if and only if $z$ is a critical point of $z\mapsto b\operatorname{Re}(z^3)$ on $S^1$. The critical points are $e^{ik\pi/3}$, $k=0,\ldots,5$.

Suppose next that $s=0$. Then $z=0$, $|y|=1$, \(F_4=2ay_1\), and
\[
T_{(y,0)}S^{n-1}
=
y^\perp\oplus\mathbb R^2 .
\]
Since
\[
\dd_z\operatorname{Re}z^3\big|_{z=0}=0,
\]
the differential vanishes in all transverse $z$-directions. The height function $y\mapsto y_1$ has precisely the two critical points $y=\pm e_1$. Hence the only critical points with $s=0$ are $(\pm e_1,0)$. This also covers the case $n=3$, where the corresponding coordinate sphere is $\mathbb S^0=\{\pm e_1\}$.

We have therefore found all critical points of $F_4$:\[
\operatorname{Crit}(F_4)=\{(\pm e_1,0)\}\cup\{(0,e^{ik\pi/3}):k=0,\ldots,5\}.
\] 
Their corresponding critical values are $\pm b$ at the six $z$-points and $\pm2a$ at the two $y$-points. Since $S^{n-1}$ is compact, every global extremum is a critical point. Because $b>5a/2>2a$, the global minimum equals $-b$ and is attained precisely at
\[
U_4=\{(0,e^{i(2j+1)\pi/3}):j=0,1,2\}.
\]
The three corresponding $z$-coordinates are the roots of $z^3=-1$ and have sum zero.
\end{proof}

Combining the universal lower bound from \cite{PTW} with the preceding model gives the sharp result.

\begin{theorem}\label{thm:nu-four}
For every $n\geq 3$, \(\nu(n)=4.\) Moreover, the extremal body may be chosen \(C^\infty\)-smooth, strictly convex, arbitrarily \(C^\infty\)-close to the unit ball, and with exactly three depth-realizing hyperplanes through its Tukey median.
\end{theorem}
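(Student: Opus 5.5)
The lower bound $\nu(n)\ge4$ for $n\ge3$ is the theorem of \cite{PTW} recalled in the Introduction, so it remains to construct, for each $n\ge3$, a body $K$ with $\nu(K)=4$. The plan is to feed the odd polynomial $F_4$ of Proposition~\ref{prop:smooth-four} into the exact realization Proposition~\ref{thm:exact-local}. Then Lemma~\ref{lem:balance} locates the Tukey median at the origin, and Proposition~\ref{prop:known-bridge} reads off the barycentric hyperplanes there.

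\textbf{Step 1: realize the profile.} Fix $a>0$ and $b>5a/2$, and set $F:=F_4|_{\Sph^{n-1}}$. This function is smooth and odd by Proposition~\ref{prop:smooth-four}. Proposition~\ref{thm:exact-local} then gives, for all small $\eps>0$, a smooth positively curved body $K_\eps$ with $\vol(K_\eps)=\omega_n$ and $K_\eps\to B^n$ in $C^\infty$. Its halfspace profile is exactly
\[
D_{K_\eps}=\tfrac{\omega_n}{2}+\eps F.
\]
Since $\rho_{K_\eps}>0$, we have $0\in\operatorname{int}K_\eps$, and positive curvature gives strict convexity.

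\textbf{Step 2: the origin is the Tukey median.} Take $\eps>0$, so the minimizers of $D_{K_\eps}$ are exactly the minimizers of $F$, namely the set $U_4$ in \eqref{eq:balanced-minima}. The three points of $U_4$ are $(0,\zeta)$ with $\zeta^3=-1$, and these $\zeta$ sum to zero. Hence $0=\frac13\sum_{u\in U_4}u\in\conv U_4$. Lemma~\ref{lem:balance} then gives $p_{K_\eps}=0$.

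\textbf{Step 3: count the cuts.} By Proposition~\ref{prop:known-bridge}, $\xi^\perp$ is barycentric at $0$ if and only if $\xi$ is critical for $D_{K_\eps}$, equivalently critical for $F$. Unoriented hyperplanes correspond to antipodal pairs $\{\pm\xi\}$. Oddness of $F$ makes its critical set antipodally symmetric. By Proposition~\ref{prop:smooth-four} the critical set consists of the pair $(\pm e_1,0)$ and the three pairs $\pm(0,e^{ik\pi/3})$ for $k=0,1,2$. These are four hyperplanes in total, so $\nu(K_\eps)=4$.

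The depth-realizing hyperplanes through $p_{K_\eps}$ are the $\xi^\perp$ with $\xi$ minimizing $D_{K_\eps}$. These are the three hyperplanes determined by $U_4$, which are distinct because no two points of $U_4$ are antipodal. This proves every clause of the theorem.

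The main obstacle is not in this theorem itself but in Proposition~\ref{prop:smooth-four} and in the exactness of the realization. Only an exact identity $D_K=C+\eps F$ guarantees that no extra critical points appear, whereas a merely approximate realization would require a Morse-stability argument. With both ingredients already in place, the only points I would check carefully are that $\eps$ must be positive (so that the minimizers of $D_{K_\eps}$ are those of $F$ rather than its maximizers) and that the counting correctly passes between critical points and unoriented hyperplanes.
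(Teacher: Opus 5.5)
Your proposal is correct and follows the paper's proof essentially step for step. Both take the lower bound from \cite{PTW}, realize $F_4$ exactly via Proposition~\ref{thm:exact-local} with $\eps>0$, place the Tukey median at the origin by Lemma~\ref{lem:balance} using the balanced triple $U_4$, and read off the four antipodal critical pairs and the three distinct depth-realizing hyperplanes through Proposition~\ref{prop:known-bridge}; your explicit checks that $e^{i(k+3)\pi/3}=-e^{ik\pi/3}$ and that no two points of $U_4$ are antipodal simply make explicit what the paper leaves implicit.
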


\begin{proof}
The lower bound $\nu(K)\geq 4$ for every convex body in dimension $n\geq 3$ comes from \cite[Theorem 1.10]{PTW}.

For the upper bound, apply Proposition~\ref{thm:exact-local} to the smooth odd function $F_4$ from Proposition~\ref{prop:smooth-four}, with $\eps>0$. This gives a smooth positively curved body $K_\eps$ with
\[
       D_{K_\eps}=\frac{\omega_n}{2}+\eps F_4.
\]
The three minimizing directions in \eqref{eq:balanced-minima} have the origin in their convex hull. Lemma~\ref{lem:balance} therefore shows that $0=p_{K_\eps}$. By Proposition~\ref{prop:known-bridge}, the barycentric hyperplanes through the origin correspond exactly to antipodal pairs of critical points of $D_{K_\eps}$, equivalently of $F_4$. There are exactly four such pairs.

Moreover, \(\operatorname{argmin}D_{K_\varepsilon}=\operatorname{argmin}F_4=U_4\). Its three members determine three distinct unoriented hyperplanes which realize the depth at the origin.
\end{proof}
\begin{remark}[The planar case]\label{rem:planar-cut}
When \(n=2\), write \(z=e^{i\theta}\in\mathbb S^1\) and set
\[
    F_3(e^{i\theta})=\cos(3\theta).
\]
This is odd and has exactly three antipodal critical pairs. Its minimum set is
\[
    U_3=
    \left\{
        e^{i(2j+1)\pi/3}:j=0,1,2
    \right\},
\]
whose three elements have sum zero. Proposition~\ref{thm:exact-local} and Lemma~\ref{lem:balance} therefore give a convex body with exactly three barycentric lines through its Tukey median. Together with \cite[Corollary~1.9]{PTW}, which gives $\nu(2)\geq 3$, this yields
\[
    \nu(2)=3.
\]
This is also the original motivation of \eqref{eq: decomposition}.
\end{remark}

\subsection{The finite cut spectrum}
The preceding construction realizes the smallest possible cut number. To obtain every larger finite value, we insert antipodal birth--death critical points away from the minimizing set.
\begin{definition}
Let $f$ be a smooth function on a smooth manifold $M$. A point $x$ is \emph{critical} if $\dd f_x=0$.  It is a \emph{birth--death critical point} if, in suitable smooth local coordinates centered at $x$, the function has the form
\[
 f(x)+t_1^3+\epsilon_2t_2^2+\cdots+\epsilon_dt_d^2,
 \qquad \epsilon_j\in\{-1,1\}.
\]
Such a point is isolated and degenerate. It is the single critical point present at the transition parameter at which a cancelling pair of Morse critical points is born or dies.
\end{definition}

\begin{lemma}\label{lem:insertion}
Let $M$ be a smooth manifold, let $f\in C^\infty(M)$, and let $x\in M$ be a regular point. Given a neighborhood $U$ of $x$ and $\delta>0$, there is $\widetilde f\in C^\infty(M)$ such that
\begin{enumerate}
\item $\widetilde f=f$ outside $U$;
\item $\lVert\widetilde f-f\rVert_{C^0}<\delta$;
\item $\operatorname{Crit}(\widetilde f)=\operatorname{Crit}(f)\sqcup\{x\}$. Moreover, $x$ is a birth--death critical point of $\widetilde f$.
\end{enumerate}
\end{lemma}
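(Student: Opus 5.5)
Since $x$ is regular, we work in a chart where $f$ is linear: by the implicit function theorem we choose smooth coordinates $t=(t_1,\dots,t_d)$ centered at $x$, defined on a coordinate ball $B_{2r}\subset U$, with
\[
f=f(x)+t_1 \quad\text{on } B_{2r}.
\]
We shrink $r$ so that $B_{2r}$ contains no critical point of $f$; this holds automatically, since $\dd f\neq0$ throughout the chart. All modifications will be made inside $B_{2r}$, which gives item~1 at once. It then suffices to construct $\widetilde f$ on $B_{2r}$ that agrees with $f(x)+t_1$ near $\partial B_{2r}$, has $x$ as its only critical point, has a birth--death normal form there, and is $C^0$-close to $f$.

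\textbf{The local model.} Take a cutoff $\chi\in C^\infty_c(B_{2r})$ with $0\le\chi\le1$ and $\chi\equiv1$ on $B_r$, and a small parameter $\lambda>0$. We set
\[
\widetilde f=f(x)+t_1+\chi(t)\bigl(\lambda^{-2}t_1^3-t_1+\textstyle\sum_{j\ge2}t_j^2\bigr)
\]
after rescaling. More cleanly, we work in the rescaled coordinates $t=\lambda s$. On the region $\chi\equiv1$ the function becomes $f(x)+\lambda^{-2}t_1^3+\sum_{j\ge2}t_j^2$, which is exactly the birth--death form with $\epsilon_j=1$ (after a linear rescaling of $t_1$), and its only critical point is $t=0$. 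The remaining task is to control the transition annulus $r\le|t|\le2r$.

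\textbf{Main obstacle: no new critical points in the annulus, together with $C^0$-smallness.} A naive cutoff can create spurious zeros of $\dd\widetilde f$ where $\chi'$ interacts with the bump. I would handle this by interpolating only in the $t_1$-direction and arranging monotonicity there. Choose a one-variable function $h_\lambda\in C^\infty(\mathbb R)$ with the following properties:
\begin{enumerate}
\item $h_\lambda(t_1)=t_1$ for $|t_1|\ge\lambda$;
\item $h_\lambda(t_1)=c\,t_1^3$ near $0$;
\item $h_\lambda'>0$ for $t_1\neq0$.
\end{enumerate}
Such an $h_\lambda$ exists by smoothing a monotone piecewise function, and $|h_\lambda(t_1)-t_1|\le2\lambda$. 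We then define
\[
\widetilde f=f(x)+h_\lambda\bigl(t_1\bigr)\ \text{ blended as }\ f(x)+\psi(t')\,h_\lambda(t_1)+(1-\psi(t'))\,t_1+\psi(t')\,\tfrac{1}{2}\textstyle\sum_{j\ge2}t_j^2\cdot 0,
\]
where $t'=(t_2,\dots,t_d)$ and $\psi$ is a cutoff in $t'$. Along the $t'$-directions, $\partial_{t'}\widetilde f=\nabla\psi\,(h_\lambda(t_1)-t_1)$, and this is nonzero only when $|t_1|<\lambda$. In that strip, however, $\partial_{t_1}\widetilde f=\psi h_\lambda'+(1-\psi)>0$ unless $\psi=1$ and $t_1=0$. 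So the only critical point is $t_1=0$, $t'$ in $\{\psi=1\}$, where $\partial_{t'}\widetilde f=0$ as well, giving a whole disk of critical points. To isolate it I add the term $\psi(t')\,\phi(t_1)\,|t'|^2$, with $\phi$ supported in $|t_1|<\lambda$ and $\phi(0)=1$, and its coefficient $\eta$ chosen small. On the support, $\partial_{t_1}$ remains positive away from $t_1=0$ provided $\eta\,|\phi'|\,|t'|^2<\min h_\lambda'$ on the relevant set. This requires care near $t_1=0$, where $h_\lambda'\to0$. I would fix this by choosing $h_\lambda(t_1)=c\,t_1^3+\kappa t_1$-free and taking $\phi$ even with $\phi'(t_1)=O(t_1)$, so that $\partial_{t_1}\ge 3c\,t_1^2-O(\eta t_1)|t'|^2$. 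This is the delicate inequality I expect to be the crux; a cleaner alternative is to use the standard birth--death model $t_1^3-\mu t_1$ with $\mu\downarrow0$ via Cerf's lemma, and I would fall back on citing it. Finally, at $t_1=0$ the equation $\partial_{t'}\widetilde f=2\eta\psi t'+\dots=0$ forces $t'=0$. The $C^0$ bound is $O(\lambda+\eta r^2)<\delta$, which gives item~2, and the normal form near $0$, namely $c\,t_1^3+\eta|t'|^2$, gives item~3.
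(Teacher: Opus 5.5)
Your explicit construction has a genuine gap at the isolating step, exactly where you flagged it. Take $\phi$ even with $\phi'(t_1)=O(t_1)$, say $\phi'(t_1)=-2\beta t_1$ near $0$. On $\{\psi=1\}$ you then get $\partial_{t_1}\widetilde f=3ct_1^2-2\beta\eta\, t_1|t'|^2$, which is negative for $0<t_1<2\beta\eta|t'|^2/(3c)$. So the bound $3ct_1^2-O(\eta|t_1|)|t'|^2$ does not give positivity. Instead, the zero set of $\partial_{t_1}\widetilde f$ is a hypersurface $t_1\approx\kappa\eta|t'|^2$, and it extends into the collar where $0<\psi<1$. There $\nabla\psi\neq0$ enters $\partial_{t'}\widetilde f$, and you give no argument excluding new critical points. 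Your final step solves $\partial_{t'}\widetilde f=0$ only at $t_1=0$, $\psi=1$, so it silently assumes the positivity you have not proved.

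The fix is to make the $t_1$-cutoff flat exactly where $h_\lambda'$ degenerates. Write the function as
$\widetilde f=f(x)+t_1+\psi(t')\bigl(h_\lambda(t_1)-t_1\bigr)+\eta\,\psi(t')\phi(t_1)|t'|^2$, with $0\le\psi\le1$, $\psi\equiv1$ near $0$, and supports small enough to lie in $B_{2r}$. Take $\phi\equiv1$ on $|t_1|\le\lambda/2$ and supported in $|t_1|<\lambda$. Set $m:=\min_{\lambda/2\le|t_1|\le\lambda}h_\lambda'>0$ and $R:=\sup_{\operatorname{supp}\psi}|t'|$, and choose $\eta\|\phi'\|_\infty R^2<\min(m,1)$.

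Then the region analysis goes through:
on $\lambda/2\le|t_1|\le\lambda$ you have $\partial_{t_1}\widetilde f\ge\min(h_\lambda',1)-\eta|\phi'||t'|^2>0$;
on $|t_1|<\lambda/2$ you have $\partial_{t_1}\widetilde f=\psi h_\lambda'+(1-\psi)$, which vanishes only at $t_1=0$, $\psi(t')=1$;
at such a point $\psi$ attains its maximum, so $\nabla\psi=0$ and $\partial_{t'}\widetilde f=2\eta t'$, forcing $t'=0$.
Near the origin $\widetilde f=f(x)+ct_1^3+\eta|t'|^2$, and $\|\widetilde f-f\|_{C^0}\le2\lambda+\eta R^2$.

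With this correction your argument is a self-contained, elementary replacement for what the paper does. The paper does not build the model by hand. It takes Laudenbach's elementary birth function $G$ on $D^{d-1}\times[-1,1]$, which equals $z$ near the boundary and has a single critical point of the form $z^3+q(y)$. It then transplants $G$ as $f(x)+rG(y/r,z/r)$. This makes $C^0$-smallness automatic, namely $r\|G-Z\|_{C^0}$, and the critical set is read off from $\dd\widetilde f_{(y,z)}=\dd G_{(y/r,z/r)}$. Your fallback of citing Cerf is therefore exactly the paper's proof.
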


\begin{proof}
The local construction used below is the elementary birth construction of Laudenbach \cite[Definition~2.3 and Remark~2.4(1)]{Lau14}, originating in \cite[Chapter~III]{Cerf}.

Set $d:=\dim M$. Since $x$ is a regular point of $f$, after shrinking around $x$, the submersion theorem gives local coordinates
\[
(y,z)\in\mathbb R^{d-1}\times\mathbb R,
\qquad x=(0,0),\qquad\text{in which }f(y,z)=f(x)+z.
\]
We choose these coordinates on a neighborhood whose closure is contained in $U$.

The elementary birth construction provides a smooth function $G$ on the unit cylinder 
\(
C:=D^{d-1}\times[-1,1]
\) 
with the following properties:
\[
G(y,z)=z\quad\text{near }\partial C,\qquad\operatorname{Crit}(G)=\{0\},
\]
and, in a neighborhood of the origin,
\[
G(y,z)-G(0)=z^3+q(y),
\]
where $q$ is a nondegenerate quadratic form. 

For $r>0$ sufficiently small, the rescaled cylinder $rC$ is contained in the chosen coordinate neighborhood. Define $\widetilde f$ on $rC$ by
\[
\widetilde f(y,z):=f(x)+rG\left(\frac{y}{r},\frac{z}{r}\right),
\]
and set $\widetilde f=f$ outside $rC$. Since $G(Y,Z)=Z$ near $\partial C$, we have
\[
\widetilde f(y,z)=f(x)+r\frac{z}{r}=f(x)+z=f(y,z)
\]
near $\partial(rC)$. Hence the two definitions fit together smoothly, and $\widetilde f=f$ outside a compact subset of $U$.

For $(y,z)\in rC$, writing $(Y,Z)=(y/r,z/r)$, we obtain
\[
\left|\widetilde f(y,z)-f(y,z)\right|=r\left|G(Y,Z)-Z\right|.
\]
Consequently,
\[
\|\widetilde f-f\|_{C^0(M)}\leq r\|G-Z\|_{C^0(C)}.
\]
Choosing $r$ sufficiently small gives 
\(
\|\widetilde f-f\|_{C^0(M)}<\delta.
\)

Moreover,
\[
\dd\widetilde f_{(y,z)} =\dd G_{(y/r,z/r)}.
\]
Thus $\widetilde f$ has exactly one critical point in $rC$, namely $(y,z)=(0,0)$, which corresponds to $x$. Since $f$ has no critical points in the chosen coordinate neighborhood and $\widetilde f=f$ outside $rC$, it follows that
\[
\operatorname{Crit}(\widetilde f)=\operatorname{Crit}(f)\sqcup\{x\}.
\]

Finally, near the origin,
\[
\begin{aligned}\widetilde f(y,z)-\widetilde f(0,0)
&=r\left[G\left(\frac{y}{r},\frac{z}{r}\right)-G(0)\right] =\frac{z^3}{r^2}+\frac{1}{r}q(y).
\end{aligned}
\]
The quadratic form $r^{-1}q$ is still nondegenerate, and the cubic coefficient is nonzero. Therefore $x$ is a birth--death critical point of $\widetilde f$.
\end{proof}

Applying the local construction simultaneously in antipodal balls preserves oddness.
\begin{corollary}\label{cor:odd-insertion}
Let $F$ be a smooth odd function on $\Sph^{n-1}$ with finitely many critical points, and let its global minimum set be \(U=\operatorname{argmin}F.\) 

Then, for every integer $r\geq 1$, there is a smooth odd function $F^{(r)}$ whose global minimum set is still $U$ and which has exactly $r$ more antipodal pairs of critical points than $F$.
\end{corollary}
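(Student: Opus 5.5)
We argue by induction on $r$; it suffices to treat $r=1$ and iterate, since each step again yields a smooth odd function with finitely many critical points and the same global minimum set. Given $F$, we pick a regular point $x\in\Sph^{n-1}$. Such points exist: $\operatorname{Crit}(F)$ is finite, so its complement is open and dense. Since $F$ is odd, $\dd F_{-x}=-\dd F_x\circ(-\mathrm{id})$ up to the antipodal identification, so $-x$ is regular as well. We then choose a neighborhood $U_x$ of $x$ with three properties: its closure contains no critical point of $F$; $U_x\cap(-U_x)=\emptyset$, which is possible because $x\neq -x$; and $F>\min F+\eta$ on $U_x\cup(-U_x)$ for some $\eta>0$. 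The last condition holds because $x$ is regular and therefore not a global minimum, and antipodal symmetry does not interfere. Indeed, if $-x$ were a minimizer it would be critical, contradicting regularity of $-x$. So $F(-x)>\min F$, and by continuity we may shrink $U_x$ so that $F$ exceeds $\min F+\eta$ on both $U_x$ and $-U_x$.

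Next we apply Lemma~\ref{lem:insertion} to $F$ at $x$ with neighborhood $U_x$ and $\delta=\eta/2$. This produces $G$ equal to $F$ outside $U_x$, with a single new critical point at $x$, which is of birth--death type, and with $\|G-F\|_{C^0}<\eta/2$. We then define $F^{(1)}$ as follows: it equals $G$ on $U_x$, equals $-G(-u)$ for $u\in -U_x$, and equals $F$ elsewhere. Because $G=F$ near $\partial U_x$ and $F$ is odd, the three pieces agree near the boundaries, so $F^{(1)}$ is smooth. It is odd by construction, since on $-U_x$ we have $F^{(1)}(u)=-G(-u)=-F^{(1)}(-u)$.

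The critical points of $F^{(1)}$ are those of $F$ together with $x$ and $-x$. Here $-x$ is critical because the map $u\mapsto -G(-u)$ has its critical points exactly at the antipodes of those of $G$. Since $x\neq -x$, this adds exactly one new antipodal pair. For the minimum set, note that on $U_x\cup(-U_x)$ we have $|F^{(1)}-F|<\eta/2$, so $F^{(1)}>\min F+\eta/2$ there. Outside these sets $F^{(1)}=F$. Hence $\min F^{(1)}=\min F$ and $\operatorname{argmin}F^{(1)}=U$.

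The only delicate point is making the antipodal copy compatible with both oddness and the minimum set. Oddness itself is immediate, but one must make sure that $-U_x$ stays away from $\operatorname{argmin}F$. The argument above handles this through regularity of $-x$ together with the uniform margin $\eta$. Everything else is supplied by Lemma~\ref{lem:insertion}. Iterating the step $r$ times, each time at a fresh regular point outside all previously modified neighborhoods, gives $F^{(r)}$.
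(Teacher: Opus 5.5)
Your proof is correct and follows the same route as the paper. In both, one applies Lemma~\ref{lem:insertion} near a regular point, copies the modification to the antipodal neighborhood by the odd rule $u\mapsto -\widetilde F(-u)$, and keeps the $C^0$-perturbation below the margin by which $F$ exceeds its minimum there. The only difference is that the paper performs all $r$ insertions at once in $2r$ pairwise disjoint antipodal balls instead of inducting on $r$, which is immaterial.
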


\begin{proof}
Choose $r$ regular points \(p_1,\ldots,p_r\) such that \(p_i\notin
\operatorname{Crit}(F)\). Since \(\operatorname{Crit}(F)\) is finite, we may choose sufficiently small neighborhoods \(B_i\) of \(p_i\) such that the \(2r\) balls \(B_i\) and \(-B_i\) are pairwise disjoint and \(\bigcup_{i=1}^r(B_i\cup -B_i)\cap\left(U\cup(-U)\cup\operatorname{Crit}(F)\right)=\varnothing\). In each \(B_i\), apply Lemma~\ref{lem:insertion} while in \(-B_i\) we define the modification by the odd rule:
\begin{align*}
F^{(r)}(u):=\begin{cases}\widetilde F_i(u),&u\in B_i,\\
-\widetilde F_i(-u),&u\in -B_i,\\
F(u),&u\notin\displaystyle\bigcup_i(B_i\cup-B_i).
\end{cases}
\end{align*}
Each operation inserts one new critical pair. The modifications can be made arbitrarily small in $C^0$ and supported in a compact set on which $F$ lies strictly above its minimum. Therefore the global minimum set \(U\) is preserved under the modification.
\end{proof}

We can now realize the entire finite spectrum.

\begin{theorem}
\label{thm:spectrum}
For every $n\geq 3$, 
\(
\Sigma_n^{\mathrm{fin}}=\{4,5,6,\ldots\}.
\) 
\end{theorem}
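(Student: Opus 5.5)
The inclusion $\Sigma_n^{\mathrm{fin}}\subseteq\{4,5,6,\ldots\}$ is immediate from the lower bound $\nu(K)\geq 4$ of \cite[Theorem~1.10]{PTW}. The value $4$ itself is realized by Theorem~\ref{thm:nu-four}. So the plan is to show that every $m\geq 5$ is attained. We do this by adding $m-4$ antipodal critical pairs to the model $F_4$ of Proposition~\ref{prop:smooth-four}, while keeping the minimizing set balanced.

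\emph{Step 1 (the odd profile).} Set $r:=m-4\geq1$. Apply Corollary~\ref{cor:odd-insertion} to $F=F_4$, which is smooth and odd with exactly eight critical points. This yields a smooth odd function $F^{(r)}$ with three properties:
\begin{itemize}
\item its critical set consists of exactly $4+r=m$ antipodal pairs;
\item its global minimum set is still $U_4$ from \eqref{eq:balanced-minima};
\item it has finitely many critical points, namely the eight original ones together with the $2r$ inserted birth--death points.
\end{itemize}
We should check that $F^{(r)}$ lies in $C^\infty_{\mathrm{odd}}(\Sph^{n-1})$, since the odd extension in the proof of the corollary is smooth. This holds because each modification agrees with $F_4$ near the boundary of $B_i$, and the balls $\pm B_i$ are disjoint.

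\emph{Step 2 (realization).} Apply Proposition~\ref{thm:exact-local} to $F^{(r)}$ with small $\eps>0$. This gives a smooth, positively curved body $K_\eps$, arbitrarily $C^\infty$-close to $B^n$, with
\[
D_{K_\eps}=\frac{\omega_n}{2}+\eps F^{(r)}.
\]
Since $\eps>0$, we have $\operatorname{argmin} D_{K_\eps}=\operatorname{argmin}F^{(r)}=U_4$. The three points of $U_4$ sum to zero, so $0\in\conv U_4$. Lemma~\ref{lem:balance} then gives $p_{K_\eps}=0$.

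\emph{Step 3 (counting).} By Proposition~\ref{prop:known-bridge}, the barycentric hyperplanes through $0$ correspond bijectively to antipodal pairs of critical points of $D_{K_\eps}$, hence of $F^{(r)}$. Here $\xi$ and $-\xi$ give the same hyperplane, and the critical set is antipodally symmetric because the function is odd. Therefore $\nu(K_\eps)=4+r=m$.

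The main point needing care is Step 1. We must ensure that the inserted critical points create no new global minima, and that the odd gluing produces no spurious critical points. Both are handled by choosing the $B_i$ disjoint from $\pm U_4$ and from $\operatorname{Crit}(F_4)$, and by taking the $C^0$ perturbation smaller than the positive gap $\min_{\overline{B_i}}F_4-\min F_4$. In addition, the birth--death points must be genuinely distinct pairs, which holds since $p_i\neq -p_j$ by the disjointness of $\pm B_i$.
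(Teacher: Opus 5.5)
Your proposal is correct and follows essentially the same route as the paper: insert $m-4$ antipodal birth--death pairs into $F_4$ via Corollary~\ref{cor:odd-insertion}, realize the result exactly via Proposition~\ref{thm:exact-local}, place the Tukey median at the origin with Lemma~\ref{lem:balance}, and count via Proposition~\ref{prop:known-bridge}. The paper simply takes $r=m-4\ge 0$ rather than treating $m=4$ separately through Theorem~\ref{thm:nu-four}, and one small precision on your side: the $C^0$ gap you need is $\min_{\overline{B_i}\cup(-\overline{B_i})}F_4-\min F_4$, since the odd copy on $-B_i$ is perturbed by the same amount, but this gap is positive because you chose the $B_i$ disjoint from $\pm U_4$.
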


\begin{proof}
The inclusion $\Sigma_n^{\mathrm{fin}}\subseteq\{4,5,\ldots\}$ is the lower bound from \cite[Theorem~1.10]{PTW}. We only need to show the reverse inclusion.

Fix $m\geq 4$. Start with $F_4$ from Proposition~\ref{prop:smooth-four} and apply Corollary~\ref{cor:odd-insertion} with $r=m-4$. The resulting smooth odd function $F_m$ has exactly $m$ antipodal critical pairs, while its global minimum set remains the balanced triple $U_4$. Choose $0<\varepsilon<\varepsilon_0(F_m)$ and apply Proposition~\ref{thm:exact-local} to obtain
\[
           D_{K_{\eps,m}}
            =\frac{\omega_n}{2}+\eps F_m,\qquad \eps>0.
\]
Lemma~\ref{lem:balance} gives $p_{K_{\eps,m}}=0$, and Proposition~\ref{prop:known-bridge} identifies the $m$ critical pairs with exactly $m$ barycentric hyperplanes through that point. Since \(\varepsilon\) may be chosen arbitrarily small, the realizing body may be chosen arbitrarily \(C^\infty\)-close to the unit ball.
\end{proof}
\begin{remark}[The planar case]\label{rem:planar-spectrum}
Applying Corollary~\ref{cor:odd-insertion} with \(F_3\) in place of \(F_4\) also gives
\[
    \Sigma^{\mathrm{fin}}_2=\{3,4,5,\ldots\}.
\]
\end{remark}

\section{A uniform five-cut theorem}
\label{sec:five-cut}

The preceding sections concern barycentric hyperplanes through the Tukey median. We now show that this maximal-depth information has a further consequence. Namely, every convex body of dimension $n\geq 3$ possesses a free point on at least five barycentric hyperplanes. In dimension four this settles Conjecture~\ref{conj:grunbaum-general}. 

\subsection{The section-centroid map and its global degree}

Let
\[
 \mathcal A_K:=\{H:\ H\text{ is an unoriented affine hyperplane, }
                         H\cap\operatorname{int}K\ne\varnothing\}\subset_{open} \frac{\Sph^{n-1}\times\R}{(u,t)\sim(-u,-t)}.
\]
In particular, $\mathcal A_K$ is an non-compact $n$-manifold. Define the section-centroid map on $\mathcal A_K$ by 
\[
 \Gamma_K:\mathcal A_K\longrightarrow\operatorname{int}K,
 \qquad
 \Gamma_K(H):=c(K\cap H).
\]

Section-center maps of this form have appeared previously in the multiplicity approach to Gr\"unbaum-type problems \cite{Kar11,BK16}. Here we work directly on the non-compact manifold \(\mathcal A_K\).

\begin{lemma}\label{lem:centroid-map}
The map $\Gamma_K$ is continuous. For every $p\in\operatorname{int}K$, \(\Gamma_K^{-1}(p)=\mathcal B_K(p).\) In particular, every fiber is compact.
\end{lemma}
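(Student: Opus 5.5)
The proof has three parts: continuity, identification of fibers, and compactness of fibers.

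\emph{Continuity.} Parametrize a hyperplane by $(u,t)\in\Sph^{n-1}\times\R$ with $H_{u,t}=\{x:\ip{x}{u}=t\}$. Openness of $\mathcal A_K$ means $t$ lies strictly between $-h_K(-u)$ and $h_K(u)$, where $h_K$ is the support function. The centroid is the quotient
\[
 c(K\cap H_{u,t})=\frac{\int_{K\cap H_{u,t}}x\,d\vol_{n-1}(x)}{\vol_{n-1}(K\cap H_{u,t})}.
\]
I will show that numerator and denominator are continuous in $(u,t)$ and that the denominator is positive. Positivity holds because $H$ meets $\operatorname{int}K$, so $K\cap H$ contains a relatively open $(n-1)$-ball.

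For continuity, fix $(u_0,t_0)$ and choose a rotation $R_u$ depending continuously on $u$ near $u_0$ with $R_u u_0=u$. Then
\[
 \int_{K\cap H_{u,t}}\phi\,d\vol_{n-1}=\int_{u_0^\perp}\mathbf 1_K\bigl(R_u(w+tu_0)\bigr)\,\phi\bigl(R_u(w+tu_0)\bigr)\,dw
\]
for $\phi=1$ or $\phi=x_i$. The integrands are uniformly bounded with uniformly bounded support. As $(u,t)\to(u_0,t_0)$ they converge pointwise except on the set of $w$ where $R_{u_0}(w+t_0u_0)\in\partial K$. That set is $(\partial K)\cap H_{u_0,t_0}$, and it has $\vol_{n-1}$-measure zero, since it is the relative boundary of the convex set $K\cap H_{u_0,t_0}$, which has nonempty relative interior. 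Dominated convergence then gives continuity.

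The map is also compatible with $(u,t)\sim(-u,-t)$, so it descends to the quotient. Finally, $c(K\cap H)$ lies in the relative interior of $K\cap H$, since the centroid of a convex set with nonempty relative interior lies in its relative interior. That relative interior is contained in $\operatorname{int}K$, so $\Gamma_K$ really lands in $\operatorname{int}K$.

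\emph{Fibers.} If $c(K\cap H)=p$, then $p\in H$, because the centroid lies in the section. So $H$ passes through $p$ and is barycentric at $p$. Conversely, every $H\in\mathcal B_K(p)$ contains $p\in\operatorname{int}K$, so $H\in\mathcal A_K$ and $\Gamma_K(H)=p$. Hence $\Gamma_K^{-1}(p)=\mathcal B_K(p)$.

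\emph{Compactness, the main point.} Every hyperplane in the fiber passes through $p$, so
\[
 \Gamma_K^{-1}(p)\subseteq\{H_{u,\ip{p}{u}}:u\in\Sph^{n-1}\}/\pm,
\]
which is a copy of $\mathbb{RP}^{n-1}$. This copy lies entirely inside $\mathcal A_K$ because $p\in\operatorname{int}K$, so it is a compact subset of $\mathcal A_K$. The fiber is closed in $\mathcal A_K$ by continuity of $\Gamma_K$, so it is a closed subset of this compact set and is therefore compact.

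The only delicate step is the dominated-convergence argument at the boundary, namely that $\partial K\cap H$ is $\vol_{n-1}$-null for hyperplanes meeting the interior. I would cite the standard fact that the boundary of a convex body has Lebesgue measure zero, applied inside the hyperplane $H$.
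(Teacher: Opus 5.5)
Your proof is correct and follows essentially the same route as the paper: dominated convergence after identifying nearby sections isometrically with a fixed one, the fiber identification directly from the definitions, and compactness because the fiber is a closed subset of the compact copy of $\mathbb{RP}^{n-1}$ formed by the hyperplanes through $p$. You also supply details the paper leaves implicit, all of which are sound: the $\vol_{n-1}$-null relative boundary that justifies a.e.\ convergence, positivity of the section volume, and the fact that the centroid lies in $\operatorname{int}K$.
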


\begin{proof}
If $H_j\to H$ in $\mathcal A_K$, choose orthogonal identifications of $H_j$ with $H$ converging to the identity. Then \(\mathbf 1_{K\cap H_j} \to \mathbf 1_{K\cap H}\) almost everywhere while all sections remain in one compact set. Dominated convergence theorem gives $c(K\cap H_j)\to c(K\cap H)$. This proves continuity.

The second assertion is the definition of $\mathcal B_K$. Finally, each fiber $\Gamma^{-1}(p)$ is a closed subset of \(\mathbb{RP}^{n-1}\), hence compact.
\end{proof}

For an interior point $p$, we may introduce the following lemma:
\begin{lemma}\label{lem:global-centroid-degree}
For every convex body $K\subset\R^n$ and every $p\in\operatorname{int}K$,
\[
 \deg_2(\Gamma_K,\mathcal A_K,p)=1.
\]
Here the degree is relative Brouwer degree: the full fiber is enclosed in a relatively compact open subset of $\mathcal A_K$.
\end{lemma}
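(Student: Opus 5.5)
Our plan is to show first that the degree does not depend on $p$, and then to compute it at a point where the fiber can be read off directly.

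\textbf{Properness.} The key step is that $\Gamma_K$ is proper as a map $\mathcal A_K\to\operatorname{int}K$. Take hyperplanes $H_j$ leaving every compact subset of $\mathcal A_K$. After passing to a subsequence they converge in $\frac{\Sph^{n-1}\times\R}{\sim}$ to a hyperplane $H$ that is a supporting hyperplane of $K$. We claim that $c(K\cap H_j)$ then approaches $\partial K$, i.e. $\operatorname{dist}(c(K\cap H_j),\partial K)\to0$. The reason is that the sections $K\cap H_j$ converge in the Hausdorff sense to the face $K\cap H\subset\partial K$. Each centroid lies in the convex set $K\cap H_j$, so its accumulation points lie in $K\cap H\subset\partial K$.

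\textbf{Consequences of properness.} For any compact $C\subset\operatorname{int}K$, the set $\Gamma_K^{-1}(C)$ is compact. Hence for every $p$ the fiber $\mathcal B_K(p)$ is contained in some relatively compact open set $W\Subset\mathcal A_K$ with $p\notin\Gamma_K(\partial W)$, so the relative mod-$2$ Brouwer degree is defined. Moreover, $\deg_2(\Gamma_K,\mathcal A_K,p)$ is locally constant in $p$: for a path $p_t$ in $\operatorname{int}K$, the compact set $\Gamma_K^{-1}(\{p_t\})$ lies in a single $W$, and homotopy invariance applies. Since $\operatorname{int}K$ is connected, the degree is a constant $d\in\mathbb Z/2$. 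We work mod $2$ throughout because $\mathcal A_K$, an open subset of the open Möbius-type bundle over $\mathbb{RP}^{n-1}$, need not be orientable.

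\textbf{Computing the constant.} We would deform $K$ rather than $p$. Let $K_t=(1-t)K+tB$, where $B$ is a small ball around an interior point, or alternatively take the linear interpolation of radial functions about a fixed interior point $p$ lying in all $K_t$. Properness is uniform in $t$ on compact subsets of the joint domain, so the degree at $p$ is invariant along the deformation. For the Euclidean ball centered at $p$, the fiber is $\mathcal B_B(p)=$ all hyperplanes through $p$, which is a copy of $\mathbb{RP}^{n-1}$ and not a finite set, so the degree cannot be read off there. Instead, we would compute it for the ball at a point $p\neq$ center. There exactly one hyperplane is barycentric: the one orthogonal to $p-c$. Indeed, any section of a ball is a lower-dimensional ball whose centroid is the foot of the perpendicular from the center, so the centroid map is $H\mapsto$ that foot. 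On the open set of hyperplanes avoiding the center this map is a diffeomorphism, since $H\mapsto$ foot point is the inversion-type chart $(u,t)\mapsto tu$ with $t\neq0$. Hence the fiber over $p\ne c$ is a single regular point and the degree is $1$. Combining the constancy in $p$ and in the deformation parameter gives $d=1$ for every $K$ and every $p$.

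\textbf{Main obstacle.} We expect the main difficulty to be the properness and uniformity claim: that centroids of sections escape to $\partial K$ uniformly as $H$ approaches supporting position, including for non-strictly convex $K$ where the limit face may be large. The point to check carefully is that the limit face lies entirely in $\partial K$, so that every accumulation point of the centroids stays away from any compact subset of $\operatorname{int}K$. This holds because a supporting hyperplane meets $K$ only in boundary points. The argument also needs joint continuity of the centroid in $(H,t)$ during the deformation, which follows from the dominated convergence argument of Lemma~\ref{lem:centroid-map}.
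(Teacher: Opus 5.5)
Your argument is correct, but it takes a different route from the paper. The paper never moves $p$ or $K$. It fixes $o\in\operatorname{int}K$ with $o\neq p$ and uses the straight-line homotopy $G_s=(1-s)\Gamma_K+sQ_o$ on the fixed domain $\mathcal A_K$, where $Q_o(H)$ is the foot of the perpendicular from $o$ to $H$. Both $\Gamma_K(H)$ and $Q_o(H)$ lie in $H$, so every solution of $G_s(H)=p$ lies in the compact set $\mathcal P_p$ of hyperplanes through $p$. This gives homotopy invariance, and $Q_o(H)=p$ has a single regular solution.

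Note that $Q_o$ is exactly your ball's centroid map $\Gamma_B$ for a ball centred at $o$. So the final computation is the same as yours; only the path to it differs.

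Your route gives an extra structural fact: properness of $\Gamma_K\colon\mathcal A_K\to\operatorname{int}K$, hence a global degree that is visibly independent of $p$. The cost is that you need joint continuity of $(H,t)\mapsto c(K_t\cap H)$ along the Minkowski deformation, plus a uniform-properness claim.

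In fact your properness step and the constancy in $p$ can both be dropped. The observation the paper uses, that $c(K_t\cap H)=p$ forces $p\in H$, already confines the whole fiber of your deformation to $\mathcal P_p\times[0,1]$. You can then choose the auxiliary ball $B$ with $p\in\operatorname{int}B$ but $p$ not its centre. Then $p\in\operatorname{int}K_t$ for all $t$, and you compute directly at the given $p$.

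One small correction: the sections $K\cap H_j$ need not converge in the Hausdorff sense to the whole face $K\cap H$. In $[0,1]^2$, the lines $x_2=1/j-x_1/\sqrt{j}$ converge to the line $x_2=0$, but their sections shrink to the corner $(0,0)$. What you need, and what you actually use, is only that limit points of points of $K\cap H_j$ lie in $K\cap H$. That follows from closedness.
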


\begin{proof}
Choose $o\in\operatorname{int}K$ with $o\ne p$, and let $Q_o(H)$ be the orthogonal projection of $o$ onto $H$.  Consider
\[
 G_s(H):=(1-s)\Gamma_K(H)+sQ_o(H),\qquad 0\le s\le1.
\]
The entire path \(s\mapsto G_s(H)\) is contained in $H$. Thus \(G_s(H)=p\Rightarrow H\in\mathcal P_p=:\{H\in\mathcal A_K:p\in H\}\simeq\mathbb{RP}^{n-1}\subset\mathcal A_K\). The solutions of the \(G_s(H)=p\) are therefore enclosed in one relatively compact open subset, so relative degree is homotopy invariant.

The equation $Q_o(H)=p$ has exactly one unoriented solution: the hyperplane through $p$ orthogonal to $p-o$. In oriented coordinates,
\[
 Q_o(H(u,t))=o+(t-\ip{o}{u})u.
\]
At the solution $u=(p-o)/|p-o|$ and $t=\ip{p}{u}$, its differential is
\[
 (\dot u,\dot t)\longmapsto
 |p-o|\dot u+(\dot t-\ip{o}{\dot u})u,
\]
which is an isomorphism from $u^\perp\oplus\R$ to $\R^n$. The unique solution is regular, and therefore the mod-$2$ degree of both $Q_o$ and $\Gamma_K$ is 1.
\end{proof}

\subsection{Local degree at a depth-minimizing cut}

\begin{lemma}\label{lem:minimizing-degree}
Let $H_0=p+u_0^\perp$ be isolated in $\Gamma_K^{-1}(p)$. If the oriented normal $u_0$ is a strict local minimum of the halfspace-volume functional, then
\[
 \deg_2(\Gamma_K,H_0;p)=1.
\]
\end{lemma}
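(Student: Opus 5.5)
We translate so that $p=0$ and work in oriented local coordinates $(u,t)$ near $(u_0,0)$, where $H(u,t)=\{x:\ip{x}{u}=t\}$. The idea is to express $\Gamma_K$ near $H_0$ through the derivatives of the cap-volume function
\[
\Phi(u,t):=\vol\bigl(K\cap\{x:\ip{x}{u}\ge t\}\bigr),\qquad A(u,t):=\vol_{n-1}(K\cap H(u,t))>0 .
\]
This reduces the local degree of $\Gamma_K$ to the index of the gradient of $D_K=\Phi(\cdot,0)$ at $u_0$.

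\textbf{Step 1 (first-variation formula).} The standard first-variation formula (the one behind Proposition~\ref{prop:known-bridge}) gives $\partial_t\Phi=-A$. For a tangent vector $v\perp u$ it gives
\[
\dd_u\Phi(u,t)[v]=\int_{K\cap H(u,t)}\ip{x}{v}\,\dd\vol_{n-1}(x)=A(u,t)\,\ip{c(K\cap H(u,t))}{v}.
\]
Since $c(K\cap H)\in H$, splitting $\Gamma_K(H(u,t))$ into its $u$-component and its $u^\perp$-component yields, in the moving frame $\R u\oplus u^\perp$,
\[
\Gamma_K(u,t)=\Bigl(t,\ \frac{\nabla_u\Phi(u,t)}{A(u,t)}\Bigr).
\]
Because the frame $u\mapsto(u,u^\perp)$ is a continuous trivialization near $u_0$, the local mod-$2$ degree of $\Gamma_K$ at $H_0$ over the value $0$ equals the local degree of this map $(u,t)\mapsto(t,\nabla_u\Phi/A)$ at $(u_0,0)$.

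\textbf{Step 2 (homotopy removing the $t$-dependence).} Consider
\[
\Gamma^s(u,t):=\Bigl(t,\ \frac{\nabla_u\Phi(u,st)}{A(u,st)}\Bigr),\qquad 0\le s\le1 .
\]
A zero of $\Gamma^s$ forces $t=0$, and then $\nabla D_K(u)=0$. So for every $s$ the zero set near $(u_0,0)$ is exactly $\Gamma_K^{-1}(0)$ there. By isolation of $H_0$, this zero set is $\{(u_0,0)\}$ on a small closed neighbourhood. Homotopy invariance of the local degree then gives
\[
\deg_2(\Gamma_K,H_0;0)=\deg_2(\Gamma^0,(u_0,0);0).
\]
The map $\Gamma^0(u,t)=(t,\nabla D_K(u)/A(u,0))$ is a product map, so its local degree equals the index of the vector field $\nabla D_K/A(\cdot,0)$ at $u_0$.

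\textbf{Step 3 (index at a strict minimum).} Since $A(\cdot,0)>0$, the straight-line homotopy between $1/A$ and $1$ introduces no new zeros, so the index equals that of $\nabla D_K$ at $u_0$. Here $D_K\in C^1$ by Proposition~\ref{prop:known-bridge}, and $u_0$ is an isolated critical point which is a strict local minimum. The classical theorem of Krasnosel'skii and Rothe states that the index of the gradient of a $C^1$ function at an isolated critical point which is a local minimum equals $1$. Its mod-$2$ reduction gives $\deg_2=1$, which proves the claim.

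\textbf{Expected obstacle.} The delicate point is regularity for a general convex body. We need $\nabla_u\Phi(u,t)$ and $A(u,t)$ to be jointly continuous near $(u_0,0)$, so that the homotopy in Step~2 is admissible. I would prove this by the same dominated-convergence argument as in Lemma~\ref{lem:centroid-map}. Alternatively, one can avoid differentiating altogether by taking $\Gamma_K$'s tangential component itself as the continuous field and running the homotopy $(t,\mathrm{proj}_{u^\perp}\Gamma_K(u,st))$ directly. If citing Krasnosel'skii is undesirable, Step~3 can instead be argued from the gradient flow of the strict minimum. Small sublevel sets $\{D_K\le D_K(u_0)+\eta\}$ near $u_0$ are isolating and contract to $u_0$, and the field is inward-pointing on their boundary, which forces index $1$.
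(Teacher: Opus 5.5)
Your argument is correct and is essentially the paper's proof. A zero-preserving homotopy decouples the normal coordinate, the first-variation formula identifies the tangential part of $\Gamma_K$ at $t=0$ with $\nabla D_K/A$, and positivity of $A$ together with the index-one theorem for isolated minima of $C^1$ gradients finishes. The only difference is cosmetic: your moving frame $\R u\oplus u^\perp$ makes the normal component exactly $t$, so you need one homotopy where the paper, working in graph coordinates $H(y,s)$ over $u_0^\perp$, uses two.
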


\begin{proof}
Translate $p$ to the origin and rotate $u_0$ to $e_n$.  Re-parameterize nearby hyperplanes by
\[
 H(y,s)=\{(x',x_n):x_n=\ip{y}{x'}+s\}.
\]
Write
\[
 \Gamma_K(H(y,s))
   =\bigl(r(y,s),\,\ip{y}{r(y,s)}+s\bigr).
\]
The zero-preserving homotopies
\[
 (r(y,s),s+\tau\ip{y}{r(y,s)}),
 \qquad
 (r(y,\tau s),s),\qquad 0\le\tau\le1,
\]give the following reduction:
\[\deg_2(\Gamma_K,H_0;0)=
\deg_2\big((y,s)\mapsto(r(y,0),s),(0,0);0\big).\]
Isolation ensures that no zero crosses the boundary of a sufficiently small coordinate neighborhood.

Set
\[
 f(y):=\operatorname{vol}\bigl(K\cap\{x_n\ge\ip{y}{x'}\}\bigr),
 \qquad
 A(y):=\operatorname{vol}_{n-1}(K\cap H(y,0)).
\]
\cite[(5)]{PTW} gives
\[
 \nabla f(y)=-\frac{A(y)}{\sqrt{1+|y|^2}}\,r(y,0),\qquad A(y)>0.
\]
The origin is an isolated strict local minimum of $f$ and \(\deg_2(\nabla f,0;0)=1\) \cite{deg1}. Multiplication by nowhere-zero scalar does not change mod-$2$ degree, which proves the assertion.
\end{proof}

The following elementary consequence of invariance of domain supplies the additional branch.

\begin{lemma}\label{lem:degree-zero-split}
Let $F:(M^n,x_0)\to(N^n,y_0)$ be a continuous map germ.  Suppose that $x_0$ is isolated in $F^{-1}(y_0)$ and \(\deg_2(F,x_0;y_0)=0.\) Then every neighborhood of $x_0$ contains two distinct points with the same image. Their common image may be chosen in any neighborhood of $y_0$.
\end{lemma}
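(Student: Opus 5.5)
We argue by contradiction, using invariance of domain. Suppose there is a neighborhood $W$ of $x_0$ on which $F$ is injective. Shrink $W$ so that it is a coordinate ball, with closure contained in the original neighborhood, and with $F^{-1}(y_0)\cap\overline W=\{x_0\}$; this uses the isolation hypothesis. Since $F|_W$ is a continuous injection between $n$-manifolds, invariance of domain (applied in charts) shows that $F|_W$ is an open map and a homeomorphism onto the open set $F(W)$. In particular $F(W)$ is an open neighborhood of $y_0$.

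The key step is to compute the local mod-$2$ degree of such a homeomorphism. Work in charts, so that $x_0=0$ and $y_0=0$ in $\mathbb R^n$. The local degree $\deg_2(F,x_0;y_0)$ is the mod-$2$ degree of the map of pairs
\[
(W,W\setminus\{x_0\})\to(\mathbb R^n,\mathbb R^n\setminus\{y_0\}),
\]
equivalently the induced map on $H_n(\,\cdot\,,\,\cdot\,\setminus\{\mathrm{pt}\};\mathbb Z/2)\cong\mathbb Z/2$. By excision, this map factors as
\[
H_n(W,W\setminus x_0)\to H_n(F(W),F(W)\setminus y_0)\to H_n(\mathbb R^n,\mathbb R^n\setminus y_0).
\]
The first arrow is an isomorphism because $F|_W$ is a homeomorphism, and the second is an isomorphism by excision. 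Hence $\deg_2(F,x_0;y_0)=1$, which contradicts the hypothesis $\deg_2=0$. Therefore $F$ fails to be injective on every neighborhood of $x_0$, and every neighborhood contains two distinct points with the same image.

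For the refinement about the common image, let $V$ be any neighborhood of $y_0$. Replace the given neighborhood of $x_0$ by its intersection with $F^{-1}(V)$; this is still a neighborhood of $x_0$ by continuity. Applying the previous paragraph to this smaller neighborhood gives $x\neq x'$ in it with $F(x)=F(x')\in V$.

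The only delicate point is checking that the local degree as defined by the paper, via a small isolating neighborhood, agrees with the homological local degree used above. If one prefers to avoid homology, one can argue with the relative Brouwer degree directly. By homotopy invariance together with excision, the degree can be computed over any small regular value near $y_0$. For a homeomorphism one can alternatively appeal to the standard fact that homeomorphisms have local degree $\pm1$, for instance by approximating $F$ uniformly by smooth maps on $\partial W$ and comparing with the degree of the inclusion of a sphere around $y_0$. I would cite this standard fact, as the paper does with \cite{deg1}, rather than reprove it.
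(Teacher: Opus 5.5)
Your proof is correct and follows essentially the same route as the paper. Injectivity on a neighborhood of $x_0$ would, by invariance of domain, make $F$ a homeomorphism onto an open set, excision would then force local mod-$2$ degree one, and this contradicts $\deg_2(F,x_0;y_0)=0$; the refinement about the common image is obtained, as in the paper, by shrinking the source neighborhood into $F^{-1}(V)$.
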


\begin{proof}
Suppose by contradiction that $F$ is an injective on a neighborhood of $x_0$, then its restriction is a homeomorphism onto an open set. 
Excision would then give local mod-$2$ degree one, a contradiction. Hence $F$ is non-injective in every neighborhood; For second assertion, given a neighborhood $W$ of $y_0$, choose a source neighborhood $V$ with\[
F(V)\subset W,\qquad
F^{-1}(y_0)\cap V=\{x_0\}.\] 
Noninjectivity gives distinct $x',x''\in V$ with $F(x')=F(x'')\in W$.
\end{proof}

\subsection{The five-cut theorem}

\begin{theorem}\label{thm:five-cut}
Every convex body $K\subset\R^n$, $n\ge3$, admits a point $p\in\operatorname{int}K$ through which pass at least five distinct unoriented barycentric hyperplanes.
\end{theorem}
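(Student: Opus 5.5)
We work at the Tukey median $p=p_K$ and combine the mod-$2$ degree count of Lemma~\ref{lem:global-centroid-degree} with the local information from Lemma~\ref{lem:minimizing-degree}. If $\mathcal B_K(p_K)$ is infinite, or if it already contains at least five hyperplanes, we are done. By \cite[Theorem~1.10]{PTW} we have $\nu(K)\ge4$. So the only remaining case is that $\Gamma_K^{-1}(p_K)=\mathcal B_K(p_K)=\{H_1,\dots,H_4\}$ consists of exactly four isolated hyperplanes.

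\textbf{Parity count.} Additivity of the relative degree gives
\[
\sum_{i=1}^4 \deg_2(\Gamma_K,H_i;p_K)=\deg_2(\Gamma_K,\mathcal A_K,p_K)=1 \pmod 2.
\]
Hence an even number of the four local degrees vanish, and since the sum is odd, at least one local degree is $0$. (Four odd local degrees would sum to $0$, so some $H_i$ must have $\deg_2=0$.) Call such a hyperplane $H_0$.

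\textbf{Producing a fifth cut.} Apply Lemma~\ref{lem:degree-zero-split} to $\Gamma_K$ at $H_0$. In every neighborhood of $H_0$ there are two distinct hyperplanes $H',H''$ with $\Gamma_K(H')=\Gamma_K(H'')=p'$, and $p'$ can be chosen arbitrarily close to $p_K$.

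We now need the other three hyperplanes to persist near $p'$. For each $i$ with nonzero local degree, a small neighborhood $W_i$ of $H_i$ still contains a point of $\Gamma_K^{-1}(p')$ once $p'$ is close enough to $p_K$; this follows from homotopy invariance of the local degree on a small isolating neighborhood. The count depends on how many of the four local degrees vanish.
\begin{itemize}
\item If exactly one local degree is $0$, the other three $H_i$ have degree $1$. Each yields a hyperplane through $p'$ near $H_i$, and together with $H',H''$ this gives five.
\item If three local degrees are $0$, the fourth has degree $1$. Apply the splitting lemma at one zero-degree $H_0$, keep the degree-one branch, and count the other two zero-degree points. The risk is that these two may produce no nearby solutions at $p'$.
\end{itemize}

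\textbf{Using the depth minimizer.} To control the second case, we use the minimum. By \cite{PTW} (via Dupin's theorem), a depth-realizing direction $u_0$ minimizes $D_K$. If the minimum is strict and isolated, Lemma~\ref{lem:minimizing-degree} gives that hyperplane local degree $1$. I expect the main obstacle to be guaranteeing two hyperplanes of odd local degree, or otherwise excluding the three-zero case. My first attempt would use the fact that at the Tukey median the minimizers must balance, as in Lemma~\ref{lem:balance}, which suggests several minimizing directions. Failing that, I would argue the three-zero case differently. A degree-zero isolated zero is generically a fold, so near $p_K$ there is an open set of points whose fiber near $H_0$ has two elements. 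Choosing $p'$ in the intersection of such open sets for two different zero-degree hyperplanes, which are both nonempty near $p_K$, gives $2+2+1=5$.

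The remaining work is showing that these "two-preimage" regions can be chosen to intersect. Since Lemma~\ref{lem:degree-zero-split} only gives non-injectivity, I would first reduce to the case where $\Gamma_K$ is a local homeomorphism away from a nowhere dense set. If that fails, I would instead perturb $p_K$ along the minimizing direction.

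Finally, all five hyperplanes are distinct. Those found near different $H_i$ lie in disjoint neighborhoods, and $H'\ne H''$.
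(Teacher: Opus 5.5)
Your reduction to exactly four isolated hyperplanes, your parity count, and your first case match the paper's argument. In that case three degree-one branches persist, and Lemma~\ref{lem:degree-zero-split} supplies the pair $H',H''$. One small slip: the four local degrees sum to $1 \pmod 2$, so an \emph{odd} number of them vanish, not an even number. Your case list of one or three zeros is nonetheless correct.

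The genuine gap is the three-zero case, which you leave open. Your fold-based fallback cannot close it:
\begin{itemize}
\item $\Gamma_K$ is a fixed, merely continuous map determined by $K$, so no genericity or transversality is available.
\item Lemma~\ref{lem:degree-zero-split} gives only non-injectivity near $H_0$. It does not give an open set of targets with two nearby preimages.
\item So there is no reason such target sets for two different degree-zero hyperplanes should intersect. Nor need the remaining zero-degree hyperplanes contribute any preimage near $p'$.
\end{itemize}

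The missing idea is that the three-zero case never occurs; your remark about balancing was the right lead. Translate $p_K$ to the origin and let $U$ be the set of minimizers of $D_K$.
\begin{itemize}
\item Maximality of depth forces $0\in\conv U$. Otherwise some $v$ has $\ip{v}{u}>0$ on a neighborhood of $U$. Moving the point slightly in direction $-v$ then strictly enlarges every nearly minimizing halfspace, which raises the depth.
\item $U$ contains no antipodal pair. If $u,-u\in U$, then $\min D_K=\vol(K)/2$. Combined with $D_K(\xi)+D_K(-\xi)=\vol(K)$, this forces $D_K$ to be constant. Then every hyperplane through $p_K$ would be barycentric, contradicting finiteness.
\item Hence $0\in\conv U$ requires at least three points of $U$. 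These give three distinct unoriented depth-realizing hyperplanes; this is the input the paper takes from \cite{PTW}.
\item Each such hyperplane comes from a global minimizer of $D_K$. Since the fiber is finite, it is an isolated critical point, hence a strict local minimum. Lemma~\ref{lem:minimizing-degree} then gives it local degree $1$.
\end{itemize}
Parity now forces the fourth local degree to be $0$. The proof reduces to your first case, exactly as in the paper.
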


\begin{proof}
Let $p_0$ be the Tukey median. By \cite{PTW}, at least four barycentric hyperplanes pass through $p_0$, and at least three distinct ones arise from depth-minimizing directions. If $\#\mathcal B_K(p_0)\ge5$, we are done. We therefore assume
\[
 \Gamma_K^{-1}(p_0)=\{H_1,H_2,H_3,H_4\},
\]
where $H_1,H_2,H_3$ are depth-realizing.

Each selected minimizing direction is an isolated strict minimum. Lemma~\ref{lem:minimizing-degree} therefore yields
\[
 \deg_2(\Gamma_K,H_i;p_0)=1,\qquad i=1,2,3.
\]
Additivity of local degree and Lemma~\ref{lem:global-centroid-degree} give
\[
 1=\deg_2(\Gamma_K,\mathcal A_K,p_0)
  =1+1+1+\deg_2(\Gamma_K,H_4;p_0)\pmod2.
\]
Thus the fourth local degree is zero.

Choose pairwise disjoint relatively compact isolating coordinate neighborhoods $U_i$ of $H_i$.  For $i=1,2,3$, local degree one persists for every target in a sufficiently small common neighborhood $W\Subset\operatorname{int}K$ of $p_0$. In particular, every $p\in W$ has a preimage in each $U_i$. Apply
Lemma~\ref{lem:degree-zero-split} to the germ of $\Gamma_K$ at $H_4$. It supplies distinct $H_4',H_4''\in U_4$ and a point $p\in W$ such that
\[
 \Gamma_K(H_4')=\Gamma_K(H_4'')=p.
\]
The three persistent degree-one branches supply $H_i'\in U_i$ with $\Gamma_K(H_i')=p$, $i=1,2,3$. Since the $U_i$ are disjoint, these are five distinct unoriented hyperplanes. Finally, $\Gamma_K(H)=p$ implies $p\in H$, so all five pass through $p$ and are barycentric there.
\end{proof}

\begin{corollary}
\label{cor:grunbaum-four}
Conjecture \ref{conj:grunbaum-general} is true in dimensions $2\le n\le 4$. 
\end{corollary}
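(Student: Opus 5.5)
The corollary follows by splitting into dimensions: for each \(n\in\{2,3,4\}\) we need an interior point with at least \(n+1\) barycentric hyperplanes, and we take it from the results already established.

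\emph{Case \(n=2\).} We need three barycentric lines through some interior point. Take \(p=p_K\), the Tukey median. By \cite[Corollary~1.9]{PTW}, recalled in Remark~\ref{rem:planar-cut}, we have \(\nu(2)\ge3\), so \(\#\mathcal B_K(p_K)\ge3=n+1\).

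\emph{Case \(n=3\).} We need four barycentric planes. Again take \(p=p_K\). The lower bound \(\nu(3)\ge4\) of \cite[Theorem~1.10]{PTW}, used in Theorem~\ref{thm:nu-four}, gives \(\#\mathcal B_K(p_K)\ge4=n+1\). Alternatively, Theorem~\ref{thm:five-cut} already supplies five.

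\emph{Case \(n=4\).} Here we need \(n+1=5\) barycentric hyperplanes, and the Tukey median no longer suffices: Theorem~\ref{thm:nu-four} shows \(\nu(4)=4\). Instead we apply Theorem~\ref{thm:five-cut} with \(n=4\). It gives a point \(p\in\operatorname{int}K\) lying on at least five distinct unoriented hyperplanes that are barycentric at \(p\), that is, \(\#\mathcal B_K(p)\ge5=n+1\).

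There is no genuine obstacle; all the work is in Theorem~\ref{thm:five-cut}. The one point to verify is that its conclusion matches the definition of \(\mathcal B_K(p)\), with hyperplanes counted without orientation and with \(c(K\cap H)=p\). This holds because the five hyperplanes produced there lie in \(\Gamma_K^{-1}(p)=\mathcal B_K(p)\) by Lemma~\ref{lem:centroid-map}, and they are distinct points of \(\mathcal A_K\).
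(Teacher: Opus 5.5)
Your proposal is correct and matches the paper's argument. The paper obtains the corollary the same way: the cases $n=2,3$ come from the lower bounds $\nu(2)\ge 3$ and $\nu(3)\ge 4$ of \cite{PTW} at the Tukey median, and the case $n=4$ comes from Theorem~\ref{thm:five-cut}.
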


\begin{remark}
The proof does not iterate automatically. The new five-fold point need not maximize halfspace depth, so the three protected minimizing branches used above are no longer available.  Thus the theorem does not settle the required $n+1$ bound in dimensions $n\ge5$.
\end{remark}

\begin{remark}
  Earlier multiplicity results for section-center maps give dimension-dependent lower bounds and may yield more than five coincident sections in certain special dimensions \cite{Kar11}. The point here is the uniform lower bound for all dimensions $n\ge 3$.
\end{remark}

\section{A centered refinement of the exact realization theorem}\label{sec:centered}

Proposition~\ref{thm:exact-local} prescribes the halfspace volume functional at the origin but does not control the centroid of the realizing body. We now show that the degree-one component is the only first-order obstruction to imposing both requirements exactly.

The relevance of the degree-one component already appears in \cite[Section 3]{XiongYang}. The new point here is to introduce an even mean-zero correction in $\ker T$. This leaves $D_K$ unchanged while allowing the centroid constraint to be imposed exactly.

\begin{theorem}
\label{thm:centered}
Let $n\geq 2$ and let $0\neq F\in C^\infty_{\mathrm{odd}}(\Sph^{n-1})$. Assume that the degree-one spherical harmonic component of $F$ vanishes. Then there is $\eps_0>0$ such that, for every $0<|\eps|<\eps_0$, there is a smooth, positively curved convex body $K_\eps$ with
\[
 \cK(K_\eps)=0,\qquad
 \vol(K_\eps)=\omega_n,\qquad
 K_\eps\longrightarrow B^n\quad\text{in }C^\infty,
\]
and
\begin{equation}\label{eq:centered-exact}
       D_{K_\eps}(\xi)
        =\frac{\omega_n}{2}+\eps F(\xi)
        \quad\text{for every }\xi\in\Sph^{n-1}.
\end{equation}
Conversely, let \(\epsilon\mapsto\rho_\epsilon\) be differentiable at \(\epsilon=0\) as a map into \(C^2(S^{n-1})\), with \(\rho_0\equiv1\), and suppose that the associated radial bodies have centroid zero and satisfy \eqref{eq:centered-exact} for every sufficiently small \(\epsilon\). Then the degree-one component of $F$ must vanish. 
\end{theorem}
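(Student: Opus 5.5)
The plan is to keep the ansatz of Proposition~\ref{thm:exact-local} but to correct it by an even, mean-zero function. By \eqref{eq:T-even} such a function lies in $\ker T$, so it leaves $D_K$ unchanged while it can be used to move the centroid. Concretely, set $g=T^{-1}F$, which is smooth, odd, and by Theorem~\ref{thm:rubin} has vanishing degree-one component. We seek
\[
 q_\eps=1+n\eps g+h_\eps,\qquad h_\eps\in C^\infty_{\mathrm{even}},\quad \int_{\Sph^{n-1}}h_\eps\dd\sigma=0,
\]
with $\rho_{K_\eps}=q_\eps^{1/n}$. Then \eqref{eq:profile-transform} gives $D_{K_\eps}=\frac1nT(1+n\eps g)+\frac1nTh_\eps=\frac{\omega_n}{2}+\eps F$, and the volume is $\frac1n\int q_\eps\dd\sigma=\omega_n$ because $g$ is odd and $h_\eps$ has mean zero. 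Convexity, positive curvature and $C^\infty$-convergence to $B^n$ follow exactly as in Proposition~\ref{thm:exact-local}, provided $h_\eps\to0$ in $C^\infty$.

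The centroid condition is
\[
 \int_{\Sph^{n-1}}u\,q_\eps(u)^{(n+1)/n}\dd\sigma(u)=0,
\]
since $\int_K x\,dx=\frac1{n+1}\int u\rho^{n+1}\dd\sigma$. I will solve it inside a fixed finite-dimensional family. Choose even, mean-zero smooth functions $e_1,\dots,e_n$ and put $h=\sum_j c_je_j$. Consider
\[
 \Phi(\eps,c)=\int u\,(1+n\eps g+h_c)^{(n+1)/n}\dd\sigma\in\R^n .
\]
First, $\Phi(0,0)=0$. Second, $\partial_c\Phi(0,0)$ has columns $\frac{n+1}{n}\int u\,e_j\dd\sigma$, and these vanish because $u\,e_j$ is odd. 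So the plain implicit function theorem degenerates at $(0,0)$, and this is the main obstacle.

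To handle it I will expand to second order. Since the linear term $\frac{n+1}{n}\int u(n\eps g)\dd\sigma$ vanishes by the degree-one hypothesis, we get
\[
 \Phi=\tfrac{n+1}{2n^2}\int u\,(n\eps g+h)^2\dd\sigma+O(|\eps|+|c|)^3 .
\]
The pure terms $\int u g^2$ and $\int u h^2$ vanish by parity, leaving $\frac{n+1}{n}\eps\int u\,g\,h\dd\sigma+\dots$. The key point is that this cross term is bilinear, which suggests scaling. Write $h=\eps\sum_j c_je_j$ and divide by $\eps^2$. The resulting map is
\[
 \tilde\Phi(\eps,c)=\tfrac{n+1}{n}\sum_j c_j\int u\,g\,e_j\dd\sigma+\tfrac{n+1}{2}\,\alpha+O(\eps),
\]
where $\alpha$ collects the purely $g$-quadratic and cubic $\eps$-contributions after rescaling. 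These are in fact odd-order terms: the even-degree terms in $g$ vanish against $u$, and the cubic term gives $\eps^3\int u g^3$, which is $O(\eps)$ after division.

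It therefore suffices to choose the $e_j$ so that the matrix $M_{ij}=\int u_i\,g\,e_j\dd\sigma$ is invertible. Since $g\neq0$ and $u_ig$ is even, the linear functionals $e\mapsto\int u_ig\,e$ on even mean-zero functions are linearly independent. Indeed, a relation $\sum a_iu_ig=$ const on the open set where $g\ne0$ would force $\langle a,u\rangle g$ to be constant there, and oddness of $g$ then forces that constant to be zero. Hence suitable $e_j$ exist, and the implicit function theorem gives a smooth solution $c(\eps)$ of $\tilde\Phi=0$ near $(0,c_0)$, with $h_\eps=O(\eps)$ in $C^\infty$. I expect the care needed to make this blow-up/rescaling rigorous, by verifying that $\tilde\Phi$ extends $C^1$ to $\eps=0$, to be the delicate step.

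For the converse, differentiate at $\eps=0$. Write $\rho_\eps=1+\eps\dot\rho+o(\eps)$ in $C^2$. The identity $\frac1nT(\rho_\eps^n)=\frac{\omega_n}2+\eps F$ gives $T\dot\rho=F$, so the odd part of $\dot\rho$ equals $T^{-1}F$. The centroid condition at first order gives $\int u\,\dot\rho\dd\sigma=0$, so the odd part of $\dot\rho$ has no degree-one component. Finally, $T$ acts on the degree-one subspace by a nonzero multiplier (Theorem~\ref{thm:rubin}), so $F$ has no degree-one component either.
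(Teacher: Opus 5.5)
Your proposal is correct, and its core mechanism is the paper's. An even, mean-zero correction lies in $\ker T$, so it leaves $D_K$ and the volume unchanged while moving the centroid. The centroid equation $\int_{\Sph^{n-1}} u\,q^{(n+1)/n}\dd\sigma=0$ is then solved by dividing out a power of $\eps$ and applying the implicit function theorem, and the converse is the same first-order argument.

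The difference is in the scaling. The paper uses the explicit correction $\eps^2\ip{a}{u}g(u)$. Its $\eps^2$ coefficient vanishes identically, so the reduced equation at $\eps=0$ is $\frac{n+1}{n}M_g a=\frac{n^2-1}{6}b_g$. This needs the full cubic Taylor computation, including $b_g=\int u g^3\dd\sigma$, but it gives the explicit leading correction $a_0=\frac{n(n-1)}{6}M_g^{-1}b_g$. Your correction of size $\eps$ makes the cross term $\frac{n+1}{n}\eps^2 Mc$ the leading one. So $\tilde\Phi(0,c)=\frac{n+1}{n}Mc$ is linear, the implicit function theorem is applied at $c=0$, and no cubic coefficient is needed. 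Your solution also has $c(\eps)=O(\eps)$, hence $h_\eps=O(\eps^2)$, consistent with the paper.

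Two small remarks. First, the step you flag as delicate is just Hadamard's lemma. For the rescaled map $\Psi(\eps,c):=\Phi(\eps,\eps c)$, both $\Psi(0,c)$ and $\partial_\eps\Psi(0,c)$ vanish for every $c$, by parity and the degree-one hypothesis. Hence $\tilde\Phi(\eps,c)=\int_0^1(1-t)\,\partial_\eps^2\Psi(t\eps,c)\dd t$ is smooth, and the constant term at $\eps=0$ is zero, so $c_0=0$. Second, you can replace the abstract linear-independence argument by the choice $e_j=u_jg$. These are even, and mean-zero exactly because of the degree-one hypothesis, and they give $M=\int u\otimes u\,g^2\dd\sigma$, which is positive definite. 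This is the paper's matrix $M_g$.
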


\begin{proof}
\textbf{Sufficiency. }Let $g=T^{-1}F$. By Theorem~\ref{thm:rubin}, $g$ is smooth, odd, nonzero, and has no degree-one component. Hence
\begin{equation}\label{eq:first-moment-g}
             \int_{\Sph^{n-1}}u g(u)\dd\sigma(u)=0.
\end{equation}
For $a\in\R^n$, define the even function
\[
 h_a(u):=\ip{a}{u}g(u).
\]
~\eqref{eq:first-moment-g} gives
$\int h_a\dd\sigma=0$, so \eqref{eq:T-even} yields $Th_a=0$.

Set
\begin{equation}\label{eq:centered-q}
 q_{\eps,a}(u)
   :=1+n\eps g(u)+\eps^2h_a(u),
 \qquad
 \rho_{\eps,a}:=q_{\eps,a}^{1/n}.
\end{equation}
Whenever $q_{\eps,a}>0$, its radial body has, exactly,
\[
 \vol(K_{\eps,a})=\frac1n\int_{\mathbb S^{n-1}} q_{\eps,a}\dd \sigma=\frac1n\left(n\omega_n+n\eps\int_{\mathbb S^{n-1}}g\dd\sigma+\eps^2\int_{\mathbb S^{n-1}}h_a\dd\sigma\right)=\omega_n\]\[
 D_{K_{\eps,a}}
  =\frac1nTq_{\eps,a}=
\frac1n(T1+n\eps Tg+\eps^2 Th_a)
  =\frac{\omega_n}{2}+\eps F.
\]
It remains to choose $a=a(\eps)$ so that the centroid vanishes.

Put $\alpha=(n+1)/n$ and define:
\[
 C(\eps,a):=\int_{\Sph^{n-1}}u\,q_{\eps,a}(u)^\alpha\dd\sigma(u).
\]
Thus \[\int_{K_{\epsilon,a}}x\,dx
=
\frac1{n+1}C(\epsilon,a),\qquad c(K_{\epsilon,a})=0\iff C(\epsilon,a)=0.\]

The coefficients of $\eps^0$, $\eps^1$, and $\eps^2$ in $C$ vanish: this follows respectively from antipodal symmetry, \eqref{eq:first-moment-g}, and the parity of $h_a$ and $g^2$. Indeed, denote $C_0,\,C_1,\,C_2$ to be the corresponding coefficients, then:\begin{align*}
  C_0&=\int_{\mathbb S^{n-1}}u\,d\sigma=0,\\
  C_1&=\alpha n\int ug(u)d\sigma=0,\\
  C_2&=\alpha\int u h_a(u)d\sigma+\frac{\alpha(\alpha-1)n^2}{2}\int ug(u)^2d\sigma=0.
\end{align*}

Therefore \(C(\eps,a)=\eps^3\Phi(\eps,a)\) for a smooth map $\Phi$ near $\eps=0$. Taylor expansion gives
\begin{equation}\label{eq:Phi-zero}
 \Phi(0,a)
   =\alpha M_g a-\frac{n^2-1}{6}\,b_g,
\end{equation}
where
\[
 M_g:=\int_{\Sph^{n-1}}u\otimes u\,g(u)^2\dd\sigma(u),
 \qquad
 b_g:=\int_{\Sph^{n-1}}u\,g(u)^3\dd\sigma(u).
\]
Indeed, two order-three terms come from \(\frac{\alpha (\alpha-1)}{2} 2n\varepsilon^3gh_a\) and \(\frac{\alpha (\alpha-1)(\alpha-2)}{6}n^3\varepsilon^3g^3\), direct calculation shows that: 
\begin{align*}
  \alpha(\alpha-1)n\int ug h_a\dd\sigma&=\alpha(\alpha-1)n\int u\langle a,u\rangle g(u)^2d\sigma=\alpha(\int u\otimes ug(u)^2d\sigma)a=\alpha M_ga\\
   \frac{\alpha(\alpha-1)(\alpha-2)n^3}{6}\int ug^3\dd\sigma&=\frac16\cdot\frac{n+1}{n}\cdot\frac1n\cdot\left(-\frac{n-1}{n}\right)\cdot n^3\int ug^3\dd\sigma=-\frac{n^2-1}{6}b_g
\end{align*}

The matrix $M_g$ is positive definite.  For $v\neq0$,
\[
 v^{\top}M_gv=\int v^\top(u\otimes u)vg(u)^2d\sigma=\int_{\Sph^{n-1}}\ip{v}{u}^2g(u)^2\dd\sigma(u)>0,
\]
because a nonzero smooth function is nonzero on an open set, which cannot be contained in the great sphere $v^\perp$. Consequently \eqref{eq:Phi-zero} has the unique solution
\[
             a_0=\frac{n(n-1)}{6}M_g^{-1}b_g,
\]
and $\partial_a\Phi(0,a_0)=\alpha M_g$ is invertible. The implicit function theorem gives a smooth bounded $a(\eps)$, with $a(0)=a_0$, such that $\Phi(\eps,a(\eps))=0$.  Thus $C(\eps,a(\eps))=0$.

Since $a(\eps)$ remains bounded, $q_{\eps,a(\eps)}\to1$ and $\rho_{\eps,a(\eps)}\to1$ in $C^\infty$. For small $|\eps|$, positivity and the same open curvature argument used in Proposition~\ref{thm:exact-local} complete the construction.

\medskip
\noindent
\textbf{Necessity. }Write
\[
 q_\eps:=\rho_{K_\eps}^n
       =1+\eps \frac{d}{d\varepsilon}
\rho_{K_\varepsilon}^n
\bigg|_{\varepsilon=0}+o(\eps)=:1+\eps q_1+o(\eps),
\]
and let \(q_1^\mathrm{odd}(u)=\frac{q_1(u)-q_1(-u)}2\) be its odd part. Differentiating \eqref{eq:profile-transform} and taking odd parts gives
\[
              Tq_1^{\mathrm{odd}}=nF.
\]
Hence Theorem~\ref{thm:rubin} implies $q_1^{\mathrm{odd}}=ng$, where $g=T^{-1}F$. $\cK(K_\eps)=0$ gives:
\begin{align*}
0&=\int_{\mathbb S^{n-1}} uq_\varepsilon(u)^\alpha\dd\sigma(u),\qquad \alpha=\frac{n+1}{n},\\
&=\int_{\mathbb S^{n-1}}u\left(1+\alpha\varepsilon q_1(u)+o(\varepsilon)\right)\dd\sigma(u),\\
&=\int_{\mathbb S^{n-1}}u\dd\sigma(u)+\alpha\varepsilon\int_{S^{n-1}}u q_1(u)\dd\sigma(u)+o(\varepsilon),\\
&=\alpha\varepsilon\int_{\mathbb S^{n-1}}u q_1(u)\dd\sigma(u)+o(\varepsilon).
\end{align*}
Differentiating at $\eps=0$ gives:
\[
 0
 =\alpha\int_{\Sph^{n-1}}u q_1(u)\dd\sigma(u)
 =\alpha\int_{\Sph^{n-1}}u (q_1^{\mathrm{odd}}(u)+q_1^{\mathrm{even}}(u))\dd\sigma(u)
 =\alpha n\int_{\Sph^{n-1}}u g(u)\dd\sigma(u),
\]
$q_1^{\mathrm{even}}$ contributes zero in the integral by oddness of $uq_1^{\mathrm{even}}$. Thus $g$ has no degree-one component. Since $T$ acts by a nonzero scalar on the degree-one harmonic subspace, $F=Tg$ also has no degree-one component.
\end{proof}

\begin{remark}
For $F=0$, the conclusion is realized by the unit ball.
\end{remark}

\begin{flushleft}
\medskip\noindent
\begin{tabbing}

				Xiaoxiang Jiao\\
				School of Mathematical Sciences, University of Chinese Academy of Sciences\\
				19A Yuquan Road, Beijing, 100049, China\\
				\texttt{xxjiao@ucas.ac.cn}
				
\end{tabbing}

\begin{tabbing}
Hangyue Zhu\\School of Mathematical Sciences, University of Chinese Academy of Sciences\\
				19A Yuquan Road, Beijing, 100049, China\\
				\texttt{zhuhangyue24@mails.ucas.ac.cn}
\end{tabbing}

\end{flushleft}

\paragraph{Conflict of Interest}
On behalf of all authors, the corresponding author states that there is no conflict of interest.

\paragraph{Data Availability}
Data sharing is not applicable to this article, as no datasets were generated or analyzed during the current study.

\paragraph{Funding}
The authors are supported by the National Natural Science Foundation of China (Grant no. 12371055). 
\bibliographystyle{alpha}
\bibliography{ref}
\end{document}